\newcommand{\rat}{\dashrightarrow}
\newcommand{\A}{\mathbb A}
\renewcommand{\P}{\mathbb P}
\newcommand{\C}{\mathbb C}
\newcommand{\R}{\mathbb R}
\newcommand{\Q}{\mathbb Q}
\newcommand{\cO}{\mathcal O}
\newcommand{\abs}[1]{\left\lvert #1 \right\rvert}
\newtheorem{theorem}{Theorem}
\newtheorem{corollary}[theorem]{Corollary}
\newtheorem{lemma}[theorem]{Lemma}
\newtheorem{proposition}[theorem]{Proposition}
\theoremstyle{definition}
\newtheorem{definition}{Definition}
\newtheorem{remark}{Remark}
\newtheorem{example}[theorem]{Example}
\DeclareMathOperator{\codim}{codim}
\DeclareMathOperator{\Bl}{Bl}
\DeclareMathOperator{\Amp}{Amp}
\DeclareMathOperator{\Mov}{Mov}
\DeclareMathOperator{\Movb}{\overline{Mov}}
\DeclareMathOperator{\Effb}{\overline{Eff}}
\DeclareMathOperator{\Nef}{Nef}
\DeclareMathOperator{\indet}{indet}
\DeclareMathOperator{\mult}{mult}
\DeclareMathOperator{\Spec}{Spec}
\begin{document}

\title[Relative asymptotic multiplicity]{A pathology of asymptotic multiplicity \\ in the relative setting}
\author{John Lesieutre} 
\address{Institute for Advanced Study\\
Einstein Dr\\ Princeton, NJ 08540, USA} \email{johnl@math.ias.edu}

\begin{abstract}
We point out an example of a projective family \(\pi : X \to S\), a \(\pi\)-pseudoeffective divisor \(D\) on \(X\), and a subvariety \(V \subset X\) for which the asymptotic multiplicity \(\sigma_V(D;X/S)\) is infinite. This shows that the divisorial Zariski decomposition is not always defined for pseudoeffective divisors in the relative setting.
\end{abstract}

\maketitle

\section{Introduction}

Suppose that \(X\) is a smooth projective variety and \(D\) is a pseudoeffective \(\R\)-divisor on \(X\).  The asymptotic multiplicity of \(D\) along a subvariety \(V \subset X\), studied by Nakayama~\cite{nakayama} and Ein-Lazarsfeld-Musta{\c{t}}{\u{a}-Nakamaye-Popa~\cite{elmnp}, has proved to be a fundamental tool in understanding the properties of the divisor \(D\).  For big divisors \(D\), the definition of the asymptotic multiplicity is straightforward: roughly, one considers the linear series \(\abs{mD}\) for larger and larger values of \(m\), and takes \(\sigma_V(D) = \lim_{m \to \infty} \frac{1}{m} \mult_V \abs{mD}\), where the multiplicity of a linear series along a subvariety is defined to be the multiplicity of a general member.

Complications arise, however, in carrying out this construction for divisors \(D\) which are pseudoeffective but not big, i.e.\ for divisors on the boundary of the pseudoeffective cone \(\Effb(X) \subset N^1(X)_\R\). Nakayama realized that \(\sigma_V(D)\) can be extended to a lower semicontinuous function on \(\Effb(X)\) by setting
\[
\sigma_V(D) = \lim_{\epsilon \to 0} \sigma_V(D+ \epsilon A),
\]
where \(A\) is a fixed ample divisor.  In some applications (e.g.\ in the construction of Zariski decompositions), it is important to know that the limit in question takes a finite value. While it is clear that the quantity on the right is nondecreasing as \(\epsilon\) is made smaller, it might \emph{a priori} be unbounded in the limit. That this does not happen in the non-relative setting was observed by Nakayama.

Our aim in this note is to demonstrate by example that when asymptotic multiplicity invariants are considered in the greater generality of divisors on a projective family \(\pi : X \to S\), this finiteness need not hold: for a \(\pi\)-pseudoeffective divisor, the limit defining \(\sigma_V(D;X/S)\) can indeed be infinite. This answers a question of Nakayama~\cite[pg.\ 33]{nakayama}.  The example itself is familiar, a divisor on the versal deformation space of a fiber of Kodaira type \(I_2\), which has been considered in related contexts by Reid~\cite[6.8]{reid} and Kawamata~\cite[Example 3.8(2)]{kawamatacy},\cite[Example 9]{kawamataremarks}.

\begin{theorem}
\label{cyrelativeex}
There exists a projective family \(\pi : X \to S\), a \(\pi\)-pseudoeffective divisor \(D\), and a subvariety \(V \subset X\) for which \(\sigma_V(D;X/S)\) is infinite.
\end{theorem}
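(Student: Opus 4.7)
The plan is to construct \(\pi\colon X \to S\) as a small resolution of the versal deformation of a Kodaira \(I_2\) fiber over its (two-dimensional) deformation base \(S\), so that \(X\) is relatively Calabi-Yau (\(K_X\) is \(\pi\)-trivial) and the central fiber contains two disjoint flopping curves \(C_1, C_2 \cong \P^1\). Following Reid and Kawamata, the flops of these two curves (and of their transforms) can be iterated over \(S\) to produce an infinite sequence
\[
X = X_0 \rat X_1 \rat X_2 \rat \cdots
\]
of small \(\Q\)-factorial modifications over \(S\), each a distinct relative minimal model with its own pair of flopping curves.

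The key structural input is that the composite birational maps \(X_0 \rat X_n\) act by an infinite-order group of pseudo-automorphisms on \(N^1(X/S)_\R\), preserving the relative pseudoeffective cone \(\Effb(X/S)\) but acting hyperbolically on it. The orbit of a single \(\pi\)-ample class accumulates on a boundary face of \(\Effb(X/S)\) spanned by irrational extremal rays that belong to the closure of infinitely many distinct chambers of the relative movable cone \(\Movb(X/S)\). I would take \(D\) to span one such extremal ray; then \(D\) is \(\pi\)-pseudoeffective, and for any fixed \(\pi\)-ample \(A\) and every \(\epsilon > 0\) the class \(D + \epsilon A\) lies in the \(\pi\)-nef chamber of some \(X_{n(\epsilon)}\), with \(n(\epsilon) \to \infty\) as \(\epsilon \to 0\).

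I would then take \(V = C_1 \subset X_0\) and compute \(\sigma_V(D + \epsilon A;X/S)\) by transporting through the chain of flops. Each individual flop forces a definite positive multiplicity along the previously flopped curve into the proper transform of a \(\pi\)-nef divisor: a class which is nef after one flop pairs negatively with the pre-flop curve, so the corresponding linear series on \(X_0\) acquires that curve in its base locus with a quantifiable multiplicity. Accumulating these contributions along the chain, the multiplicity along \(C_1 \subset X_0\) of the proper transform of a \(\pi\)-ample class on \(X_n\) grows at least linearly in \(n\). Combining this with \(n(\epsilon) \to \infty\) yields \(\sigma_V(D;X/S) = \lim_{\epsilon \to 0} \sigma_V(D + \epsilon A;X/S) = \infty\).

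The principal technical obstacle is the explicit bookkeeping of multiplicities through the infinite flop chain. One must pin down the matrices by which the two generating flops act on \(N^1(X/S)_\R\) for the versal \(I_2\) deformation, verify the hyperbolic structure and the irrationality of the limit rays, and control the multiplicity contributed at each step precisely enough to guarantee that contributions do not cancel or decay as \(n\) grows. The Calabi-Yau condition is essential throughout, since it guarantees that each relative flop exists and that the infinite sequence is not interrupted by a divisorial contraction that would truncate the tower of models.
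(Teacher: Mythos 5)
Your overall strategy is the one the paper uses --- iterate the Reid--Kawamata flops on the versal $I_2$ deformation, take $D$ on the boundary ray approached by the nef chambers of the models $X_n$, take $V=C_1$, and track multiplicities back through the flop chain --- but the quantitative heart of the argument is missing, and as stated your estimate does not prove infinitude. The class $D+\epsilon A$ with $\epsilon \sim 1/n$ is (up to $\pi$-numerical equivalence) $\frac{1}{2n}H_n$, where $H_n$ is the transform of a fixed ample class under $n$ flops; so $\sigma_{C_1}(D+\epsilon A)$ is controlled by $\frac{1}{2n}\mult_{C_1}(H_n)$, and a \emph{linear} lower bound $\mult_{C_1}(H_n)\gtrsim n$ only gives a bounded, not divergent, sequence. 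What saves the example is that the increment contributed by the $k$-th flop is not a fixed positive constant but is itself $H_k\cdot C_1 = 2k+1$ (the flop formula is $\mult_{C_1^+}(\tilde D)=\mult_{C_1}(D)+D\cdot C_1$), so the multiplicities telescope to $\mult_{C_1}(H_n)=n(n-1)/2$: quadratic growth, which beats the $1/(2n)$ normalization. Your ``accumulating a definite positive multiplicity at each step'' argument establishes only the insufficient linear bound.

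Two of your structural claims are also wrong for this example, and not harmlessly so. First, the components $C_1,C_2$ of the $I_2$ fiber meet at the two nodes; they are not disjoint. Disjointness would give $\tilde D\cdot C_2'=D\cdot C_2$ instead of $D\cdot C_2+2(D\cdot C_1)$, the induced action on $N_1(X/S)$ would then have finite order, and there would be no infinite chain of models at all. Second, the action of $\phi_\ast$ on the two-dimensional $N^1(X/S)$ is parabolic, not hyperbolic: the relevant matrix $\left(\begin{smallmatrix}2&1\\-1&0\end{smallmatrix}\right)$ is unipotent with a single Jordan block for the eigenvalue $1$, and the limit ray $D$ (with $D\cdot C_1=1$, $D\cdot C_2=-1$) is rational. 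This is not cosmetic: the unipotent Jordan block is exactly what makes the intersection numbers $H_n\cdot C_i$ grow linearly and hence the multiplicities grow quadratically, which is the mechanism you need to identify and verify to close the gap.
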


An important use of asymptotic multiplicity invariants is in the construction of the divisorial Zariski decomposition, a higher-dimensional analog of the usual Zariski decomposition on surfaces.  The example here shows that trouble arises if one generalizes this construction to pseudoeffective classes in the relative setting: after passing to a blow-up on which the valuation corresponding to \(V\) is divisorial, we obtain an example in which the decomposition is not defined.

\begin{corollary}
\label{nodzd}
Let \(\pi : X \to S\) be as in Theorem~\ref{cyrelativeex}.  If \(f : W \to X\) is the blow-up along \(V\) with exceptional divisor \(E\), then \(\tilde{D} = f^\ast D\) has \(\sigma_E(\tilde{D}; W/S) = \infty\) and \(N_\sigma(\tilde{D};W/S)\) is not defined.
\end{corollary}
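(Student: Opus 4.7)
The plan is to reduce the computation of $\sigma_E(\tilde D;W/S)$ to the computation of $\sigma_V(D;X/S)$ on $X$ using the naturality of asymptotic multiplicity under the blow-up $f$, and then invoke Theorem~\ref{cyrelativeex}. I will fix a $\pi$-ample divisor $A$ on $X$ and choose $\delta > 0$ small enough that $H := f^*A - \delta E$ is $(\pi \circ f)$-ample on $W$; this is possible because $f^*A$ is $(\pi\circ f)$-nef and $-E$ is $f$-ample, which lets us perturb into the $(\pi\circ f)$-ample cone.

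For each $\epsilon > 0$ the class $D + \epsilon A$ is $\pi$-big, so $\sigma_V(D+\epsilon A; X/S)$ admits the standard description as a normalized limit of multiplicities of $\pi$-linear series. Since a general member of $|m(D+\epsilon A)|_\pi$ has multiplicity along $V$ equal to the $E$-coefficient of its pullback under $f$, one gets the blow-up compatibility
\[
\sigma_V(D+\epsilon A;X/S) = \sigma_E\bigl(f^*(D+\epsilon A);W/S\bigr) = \sigma_E(\tilde D + \epsilon f^*A;W/S).
\]
Rewriting $f^*A = H + \delta E$ and applying the identity $\sigma_E(F + tE) = \sigma_E(F) + t$, valid as soon as the left-hand side is at least $t$, with $F = \tilde D + \epsilon H$ and $t = \epsilon \delta$ gives
\[
\sigma_E(\tilde D + \epsilon H; W/S) = \sigma_V(D+\epsilon A; X/S) - \epsilon\delta.
\]
The hypothesis needed to apply the identity is automatic for small $\epsilon$, because Theorem~\ref{cyrelativeex} forces the right-hand side to grow without bound. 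Letting $\epsilon \to 0^+$, the right side tends to $\sigma_V(D;X/S) = \infty$ while the left side tends by definition to $\sigma_E(\tilde D; W/S)$, giving the first assertion.

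For the second assertion, $N_\sigma(\tilde D; W/S)$ is by definition the formal sum $\sum_{E'} \sigma_{E'}(\tilde D;W/S)\cdot E'$ taken over prime divisors $E' \subset W$; the coefficient along the prime divisor $E$ has just been shown to equal $+\infty$, so this sum is not a well-defined $\R$-divisor and $N_\sigma(\tilde D;W/S)$ is undefined.

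The only step I expect to require genuine care is the blow-up compatibility $\sigma_V(D';X/S) = \sigma_E(f^*D';W/S)$ for $\pi$-big $D'$: the underlying identity of multiplicities of $\pi$-linear series is routine, but must be phrased precisely in the relative setting. Once it is in hand, the remainder of the argument is essentially bookkeeping with divisor classes.
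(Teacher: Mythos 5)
Your argument is correct and is essentially the paper's: the paper simply invokes Proposition~\ref{sigmaproperties}(2) (Nakayama's compatibility of \(\sigma\) with blow-up along \(V\)) to get \(\sigma_E(f^\ast D;W/S)=\sigma_V(D;X/S)=\infty\), and then notes that \(E\) would appear in \(N_\sigma\) with infinite coefficient. What you do in addition is reprove that cited compatibility by hand for \(\pi\)-big perturbations and handle the passage to the pseudoeffective limit via the \((\pi\circ f)\)-ample class \(H=f^\ast A-\delta E\) together with the identity \(\sigma_E(F+tE)=\sigma_E(F)+t\); this is sound but is work the paper outsources to the reference.
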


Moreover, the divisor \(\tilde{ D}\) does not admit any Zariski decomposition in a very strong sense:
\begin{corollary}
\label{nowzd}
There does not exist a birational model \(g : Z \to W\) for which \(g^\ast \tilde{D}\) admits a decomposition \(g^\ast \tilde{D} = P+N\) with \(P\) a \(g \circ (f \circ \pi)\)-movable divisor and \(N\) effective.
\end{corollary}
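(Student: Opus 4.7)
The plan is to argue by contradiction. Suppose that on some birational model $g : Z \to W$ there is a decomposition $g^\ast \tilde{D} = P + N$ with $P$ relatively movable over $S$ and $N$ effective. The goal is to use this decomposition to produce a finite upper bound for $\sigma_E(\tilde{D}; W/S)$, contradicting Corollary~\ref{nodzd}. The idea is that the movable piece $P$ should contribute nothing to the asymptotic multiplicity along any divisorial valuation, while the effective piece $N$ can contribute at most a finite amount.

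To make this precise, let $\tilde{E} \subset Z$ denote the strict transform of $E$. Because the asymptotic multiplicity depends only on the divisorial valuation of the common function field $k(W) = k(Z)$ and on the numerical class of the divisor, one expects
\[
\sigma_E(\tilde{D}; W/S) = \sigma_{\tilde{E}}(g^\ast \tilde{D}; Z/S).
\]
Subadditivity applied to $g^\ast \tilde{D} = P + N$ then gives
\[
\sigma_{\tilde{E}}(g^\ast \tilde{D}; Z/S) \leq \sigma_{\tilde{E}}(P; Z/S) + \sigma_{\tilde{E}}(N; Z/S).
\]
The first term on the right vanishes, since the asymptotic multiplicity along a prime divisor is identically zero on the closed movable cone $\Movb(Z/S)$. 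The second is at most $\mult_{\tilde{E}} N$, which is finite because $N$ is a fixed effective divisor. Combining these inequalities yields $\sigma_E(\tilde{D}; W/S) < \infty$, the desired contradiction.

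The main obstacle will be verifying that the three properties of $\sigma$ invoked in the argument --- invariance under birational pullback, subadditivity, and vanishing on the closed movable cone --- all persist in the relative setting, bearing in mind that $\sigma$ itself may take the value $\infty$. In the absolute case these are standard results of Nakayama~\cite{nakayama}, and one expects the proofs to adapt with only formal modifications once ample perturbations are replaced by $\pi$-ample ones in the defining limit. The subadditivity inequality must simply be read with the convention that a finite right-hand side forces the left-hand side to be finite, which is exactly what is needed here.
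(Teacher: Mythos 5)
Your argument is essentially identical to the paper's proof: the paper likewise passes to the strict transform $\tilde E$ on the higher model, applies the subadditivity $\sigma_{\tilde E}(g^\ast\tilde D) \le \sigma_{\tilde E}(P)+\sigma_{\tilde E}(N)$, kills the first term by movability of $P$, bounds the second by effectivity of $N$, and contradicts the infinitude of $\sigma_E(\tilde D;W/S)$ from Corollary~\ref{nodzd}. The relative versions of the properties you flag as needing verification are exactly the ones the paper takes for granted (via Nakayama's Chapter~III in the relative setting), so there is no substantive difference.
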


In Section~\ref{prelims} we recall the basic definitions and properties of the invariants \(\sigma_V(D;X/S)\) and \(N_\sigma(D;X/S)\) appearing in Theorem~\ref{cyrelativeex} and Corollary~\ref{nodzd}, before establishing the claims in Section~\ref{examplesect}.  In Section~\ref{generalsetup}, we describe a more general setting for making computations in a similar spirit.

\section{Preliminaries}
\label{prelims}

Suppose that \(\pi : X \to S\) is a projective, surjective morphism with connected fibers, with \(X\) and \(S\) normal and \(\Q\)-factorial (hereafter, a \emph{nice family}).  We will find it convenient to allow the base \(S\) to be a surface germ, following~\cite{kawamatacrepant}.  Two divisors \(D\) and \(D^\prime\) on \(X\) are said to be numerically equivalent over \(S\), or \(\pi\)-numerically equivalent, if \(D \cdot C = D^\prime \cdot C\) for any curve \(C\) that is contracted by \(\pi\); write \(D \equiv_\pi D^\prime\) for the relation of numerical equivalence over \(S\), and \(N^1(X/S)\) for the vector space of \(\R\)-divisors on \(X\), modulo this equivalence.

The familiar cones of positive divisors on a projective variety all have analogs in the relative setting: a divisor \(D\) on \(X\) is said to be
\begin{enumerate}
\item \(\pi\)-ample if \(D_s\) is ample on every fiber \(X_s = \pi^{-1}(s)\); \item \(\pi\)-nef if \(D_s\) is nef on every fiber \(X_s\) (i.e.\ if \(D \cdot C \geq 0\) for every curve \(C\) contracted by \(\pi\));
\item \(\pi\)-movable if the support of the cokernel of \(f^\ast f_\ast \cO_X(D) \to \cO_X(D)\) has codimension at least \(2\);
\item \(\pi\)-big if the restriction of \(D\) to the generic fiber is big;
\item \(\pi\)-pseudoeffective if the restriction of \(D\) to the generic fiber is pseudoeffective.
\end{enumerate}
Corresponding to these classes of divisors are cones inside \(N^1(X/S)\):
\[
\Amp(X/S) \subseteq \Mov(X/S) \subseteq \Effb(X/S).
\]
We note that the cone \(\Effb(X/S)\) is not necessarily a strongly convex cone, in that it might contain an entire line through the origin; this contrasts with the familiar case when \(S\) is a point. For example, if \(D\) restricts to \(0\) on a general fiber of \(\pi\), then \(D\) and \(-D\) are both \(\pi\)-pseudoeffective.

For simplicity, we will assume that the base space \(S\) is affine and that there exists a \(\pi\)-ample divisor \(A\) on \(X\). This is not really necessary, but the invariants under consideration can be computed in the general setting simply restricting to the preimage of a suitable affine open set; we refer to~\cite[\S 3.2]{nakayama} for details.  If \(D\) is a \(\pi\)-big divisor, then \(f_\ast \cO_X(mD) \neq 0\) for sufficiently large and divisible \(m\), and so \(H^0(X,\cO_X(mD)) = f_\ast(\cO_X(mD))\) is nonzero as well.  Hence if \(S\) is affine, any \(\pi\)-big class has an effective representative.

\begin{definition}
\label{sigmadef}
Given an irreducible subvariety \(V \subset X\) and a \(\pi\)-big \(\R\)-divisor \(D\), set
\[
\sigma_V(D; X/S) = \inf_{\substack{ D^\prime \equiv_\pi D \\ D^\prime \geq
0}} \mult_V(D^\prime).
\]
Since \(D\) is \(\pi\)-big, there exists an effective \(\R\)-divisor \(D^\prime\) that is \(\pi\)-numerically equivalent to \(D\), and this infimum is taken over a nonempty set.
\end{definition}

In the definition, \(D^\prime\) ranges over effective \(\R\)-divisors that are \(\pi\)-numerically equivalent to \(D\).  When \(S = \Spec \C\) and \(D\) is a big integral divisor, a sequence \(D^\prime_m\) of such \(\R\)-divisors with multiplicities converging to the infimum can be found by taking \(D^\prime_m \in \frac{1}{m} \abs{mD}\), where we choose a general element of the linear system \(\abs{mD}\).

We next extend the definition of the asymptotic multiplicity from \(\pi\)-big divisors to \(\pi\)-pseudoeffective divisors.

\begin{definition}
Given a \(\pi\)-pseudoeffective \(\R\)-divisor \(D\), set
\[
\sigma_V(D;X/S) = \lim_{\epsilon \to 0} \sigma_V(D + \epsilon A; X/S).
\]
\end{definition}

This is evidently a nondecreasing function as \(\epsilon\) approaches \(0\), but it might have infinite limit.  To show that it has a finite limit, it suffices to bound \(\sigma_V(D+\epsilon A;X/S)\) above, independent of \(\epsilon\). Nakayama gives several conditions under which this can be achieved.

\begin{theorem}[\cite{nakayama}, Lemmas 2.1.2, 3.2.6]
\label{finiteconds}
If any of the following holds, then \(\sigma_V(D;X/S)\) is finite.
\begin{enumerate}
\item \(S = \Spec \C\) is a point;
\item \(D\) is numerically equivalent over \(S\) to an effective \(\R\)-divisor \(\Delta\);
\item \(\codim \pi(V) < 2\).
\end{enumerate}
\end{theorem}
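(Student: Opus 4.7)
The plan is to bound $\sigma_V(D + \epsilon A; X/S)$ above independently of $\epsilon \in (0,1]$; the paper already observes that this function is nondecreasing as $\epsilon \to 0^+$, so a uniform upper bound suffices to produce a finite limit.

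Case (2) is the most direct. Given $\Delta \equiv_\pi D$ effective, replace $A$ by a multiple so that $|mA|$ is $\pi$-base-point free, and choose a general $A' \in |mA|$ with $V \not\subset \mathrm{supp}(A')$. Then $\Delta + (\epsilon/m) A'$ is an effective $\R$-divisor representing $D + \epsilon A$ in the $\pi$-numerical class, with multiplicity $\mult_V(\Delta)$ along $V$ independent of $\epsilon$, giving $\sigma_V(D + \epsilon A; X/S) \leq \mult_V(\Delta) < \infty$.

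Case (1), together with the portion of case (3) in which $V \neq \pi^{-1}(\pi(V))$ as sets, I would handle via an intersection-number bound. Pick a general point $s \in \pi(V)$ (the unique point of $S$ when $S = \Spec \C$), a smooth point $p \in V \cap X_s$ at which $V$ is smooth in $X$, and a general complete intersection curve $C = H_1 \cap \cdots \cap H_r$ inside $X_s$ of very ample divisors on $X$, chosen smooth and transverse to $V$ at $p$. For any effective $D'$ with $C \not\subset \mathrm{supp}(D')$, the standard local intersection inequality gives $\mult_V(D') = \mult_p(D') \leq (D' \cdot C)_p \leq D' \cdot C$, and since $C$ is $\pi$-contracted this last intersection equals $D \cdot C$, a $\pi$-numerical invariant. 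Slight perturbations of $C$ inside its linear family sidestep any given effective representative, so $\sigma_V(D + \epsilon A; X/S) \leq (D + \epsilon A) \cdot C$, which is finite and bounded as $\epsilon \to 0$.

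The remaining subcase, $\codim \pi(V) = 1$ with $V$ a component of $\pi^{-1}(\pi(V))$, is the main obstacle, since then no curve $C$ inside a single fiber can be transverse to $V$ at all. Here I would reduce to case (2) using that $\pi^\ast(T)$ for $T = \pi(V)$ is $\pi$-linearly trivial and contains $V$ with positive coefficient: working locally on an affine neighborhood of a general point of $T$, one shifts a near-effective representative of $D$ by an appropriate multiple of $\pi^\ast(T)$ to produce a globally effective representative in the same $\pi$-numerical class. The delicate point is preserving non-negativity of the multiplicities along the other components of $\pi^{-1}(T)$ during this shift, and it is precisely the failure of such manipulation when $\codim \pi(V) \geq 2$---the regime left uncovered by the hypotheses---that is exploited by Theorem~\ref{cyrelativeex}.
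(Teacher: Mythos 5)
Your cases (1) and (2) are correct. Case (2) is the paper's ``immediate from the definition'' observation, made explicit. For case (1) the paper first reduces to the case where \(V\) is a prime divisor via Proposition~\ref{sigmaproperties}(2) and then intersects the pseudoeffective class \((D+\epsilon A)-\sigma_V(D+\epsilon A)V\) with \(A^{n-1}\); you instead bound \(\mult_V(D')\) for an effective representative \(D'\) by intersecting with a general complete-intersection curve through a point of \(V\). This is the same mechanism (properness of \(X\) plus the numerical invariance of \(D'\cdot C\)), and your version has the mild advantage of avoiding the blow-up reduction, at the cost of invoking the local inequality \(\mult_p(D')\mult_p(C)\le (D'\cdot C)_p\); note that your ``\(\mult_V(D')=\mult_p(D')\)'' should be ``\(\le\)'', since \(p\) is fixed before \(D'\), but only the inequality is needed.

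Case (3) is where the genuine gaps are (the paper itself does not prove this part, deferring to Nakayama). First, in your subcase \(\codim\pi(V)=1\) with \(V\) not a fiber component, the curve \(C\) is forced to lie in a fiber over \(T=\pi(V)\), and an effective representative \(D'\) may contain the entire divisor \(\pi^{-1}(T)\) in its support; then every perturbation of \(C\) within the fibers over \(T\) still lies in \(\operatorname{supp}(D')\), and you cannot move \(s\) off \(T\) because \(p\) must lie on \(V\). So the inequality \(\mult_V(D')\le D'\cdot C\) is simply unavailable for such \(D'\), and you have not exhibited any representative to which it applies. Second, your reduction in the fiber-component subcase is only a sketch whose ``delicate point'' is in fact the entire content: subtracting \(c\,\pi^\ast T\) with \(c=\min_i \mult_{\Gamma_i}(D')/a_i\) (where \(\pi^\ast T=\sum a_i\Gamma_i\)) only forces the \emph{minimizing} component out of the support, and that component need not be \(V\); by itself this gives no bound on \(\mult_V\). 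What is missing in both subcases is the argument that propagates the bound from the component with zero multiplicity to \(V\) using the connectedness of the fibers and intersection numbers of the \(\Gamma_i\) with curves in the fiber --- this negativity-type lemma is precisely the substance of Nakayama's Lemma~3.2.6, and it is also exactly what fails when \(\codim\pi(V)\ge 2\), as Theorem~\ref{cyrelativeex} shows. As written, your case (3) should be regarded as a plausible strategy rather than a proof.
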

We recall the proof in case (1), perhaps the most important in practice.  Case (2) is immediate from the definition, and we refer to \cite{nakayama} for (3).  Assume for a moment that \(V \subset X\) is an irreducible divisor; that this implies the general statement will follow from Theorem~\ref{sigmaproperties}(2) below.
\begin{proof}[Proof of (1)]
For any \(\epsilon\), \((D+\epsilon A) - \sigma_V(D + \epsilon A) V\) is pseudoeffective, and so
\[
\left( (D+\epsilon A) -\sigma_V(D + \epsilon A) V \right) \cdot A^{n-1}  \geq 0.
\]
As long as \(\epsilon < 1\) it follows that
\[
\sigma_V(D+\epsilon A) \leq \frac{(D+\epsilon A) \cdot A^{n-1}}{V \cdot A^{n-1}} \leq \frac{(D+A) \cdot A^{n-1}}{V \cdot A^{n-1}}
\] 
is bounded above as \(\epsilon\) decreases to \(0\).
\end{proof}
This argument relies in a crucial way on the properness of \(X\) to carry out intersection theory, and is not applicable in the relative setting in general.  

\begin{proposition}[\cite{nakayama}, Lemmas 2.1.4, 2.2.2, 2.1.7]
\label{sigmaproperties}
Suppose that \(\pi : X \to S\) is a nice family and \(V \subset X\) is an irreducible subvariety.
\begin{enumerate}
\item If \(F\) is any \(\pi\)-pseudoeffective divisor on \(X\), then
\[
\lim_{\epsilon \to 0} \sigma_V(D+\epsilon F;X/S) = \sigma_V(D;X/S).
\]
\item Let \(f : W \to X\) be the normalized blow-up of \(X\) along \(V\), and let \(E\) be a component of the exceptional divisor over \(V\).  Then \(\sigma_E(f^\ast D;W/S) = \sigma_V(D;X/S)\).
\item The number of prime divisors \(\Gamma\) for which \(\sigma_\Gamma(D;X/S) > 0\) is finite.
\end{enumerate}
\end{proposition}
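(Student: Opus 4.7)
The plan is to prove the three parts in order, each by reducing to the $\pi$-big case and then passing to the limit in the defining $\pi$-ample perturbation. On $\pi$-big classes $\sigma_V$ is an honest infimum over effective representatives and is positively homogeneous and subadditive, and it satisfies $\sigma_V(A;X/S)=0$ for any $\pi$-ample $A$ (a general member of the relative linear series $\abs{mA}$ avoids $V$ for $m$ large).

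For part (1), since $F+\mu A$ is $\pi$-big for every $\mu>0$, subadditivity on $\pi$-big classes yields an estimate of the shape
\[
\sigma_V(D+\delta A+\epsilon F;X/S)\le \sigma_V\bigl(D+(\delta-\epsilon\mu)A;X/S\bigr)+\epsilon\,\sigma_V(F+\mu A;X/S),
\]
valid whenever $\delta>\epsilon\mu$. Sending $\delta\to 0$ first and then $\epsilon\to 0$ bounds $\limsup_{\epsilon\to 0}\sigma_V(D+\epsilon F;X/S)$ by $\sigma_V(D;X/S)$; the reverse inequality is immediate from the infimum formulation, since adding the $\pi$-pseudoeffective class $\epsilon F$ to $D+\delta A$ can only restrict the set of competing effective representatives.

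For part (2), work first with $\pi$-big $D$. Pullback of effective representatives, together with $\mult_E f^\ast D'=\mult_V D'$, gives $\sigma_E(f^\ast D;W/S)\le \sigma_V(D;X/S)$. For the reverse inequality, given an effective $D''\equiv_{\pi\circ f}f^\ast D$, take $D'=f_\ast D''$, which is effective and $\equiv_\pi D$. The needed inequality $\mult_V f_\ast D''\le \mult_E D''$ follows because $D''-f^\ast f_\ast D''$ is supported on the exceptional locus of $f$ and is $\pi\circ f$-numerically trivial, so intersection with a fiber $\ell$ of $E\to V$ (which is $f$-contracted with $E\cdot \ell\neq 0$ by negativity of the exceptional divisor) forces its $E$-coefficient to vanish. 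The pseudoeffective case follows by applying this equality to $D+\epsilon A$ and passing $\epsilon\to 0$ via part (1) on $W$, which applies because $f^\ast A$ is $\pi\circ f$-pseudoeffective even though it is not $\pi\circ f$-ample.

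For part (3), the $\pi$-big case is immediate: any effective representative $D'\equiv_\pi D$ has finitely many components, and $\sigma_\Gamma(D;X/S)\le \mult_\Gamma D'$ forces $\sigma_\Gamma(D;X/S)=0$ outside this finite set. For $\pi$-pseudoeffective $D$, note first that every prime divisor $\Gamma$ has $\codim_S \pi(\Gamma)\le 1$, so Theorem~\ref{finiteconds}(3) already guarantees $\sigma_\Gamma(D;X/S)<\infty$. To bound the support, fix an effective $D'\equiv_\pi D+A$ with components $\Gamma_1,\ldots,\Gamma_k$, and argue that any prime $\Gamma\notin\set{\Gamma_1,\ldots,\Gamma_k}$ has $\sigma_\Gamma(D;X/S)=0$. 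For such $\Gamma$, convex combinations $\epsilon D'+(1-\epsilon)D'_\delta$ with $D'_\delta\equiv_\pi D+\delta A$ an approximate minimizer give effective representatives of $D+(\epsilon+(1-\epsilon)\delta)A$ with $\Gamma$-multiplicity $(1-\epsilon)\mult_\Gamma D'_\delta$. The main obstacle I foresee is the last step: a naive limit only gives $\sigma_\Gamma(D+\eta A)\le (1-\eta)\sigma_\Gamma(D;X/S)$, so a genuine iteration (combined with a fiber-intersection estimate on the generic fiber, which is legal precisely because $\Gamma$ has image of codimension at most $1$ in $S$) is needed to force the multiplicity all the way to zero.
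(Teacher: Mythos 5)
The paper does not actually prove this proposition --- it is quoted from \cite{nakayama} --- so there is no internal argument to compare against; judging your attempt on its own terms, part (2) is essentially sound (though the one-curve intersection computation killing the \(E\)-coefficient of \(D''-f^\ast f_\ast D''\) really needs the negativity lemma once the normalized blow-up has more than one exceptional component), but parts (1) and (3) have genuine gaps. In part (1), your key inequality requires \(\delta>\epsilon\mu\), so you cannot ``send \(\delta\to 0\) first'' with \(\epsilon\) and \(\mu\) fixed; and if you instead let \(\mu\) shrink with \(\delta\), the term \(\sigma_V(F+\mu A;X/S)\) increases to \(\sigma_V(F;X/S)\), which in the relative setting may be infinite --- this is precisely the pathology the paper is about. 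The failure is not merely in your argument: take \(D=0\) and \(F\) the boundary class of Theorem~\ref{computeiterates}; homogeneity gives \(\sigma_{C_1}(\epsilon F;X/S)=\epsilon\,\sigma_{C_1}(F;X/S)=\infty\) for every \(\epsilon>0\), while \(\sigma_{C_1}(0;X/S)=0\). So the statement needs a finiteness hypothesis such as \(\sigma_V(F;X/S)<\infty\) (satisfied in all of the paper's applications), under which your subadditivity estimate does close: fix \(\mu\), bound \(\sigma_V(F+\mu A)\le\sigma_V(F+\mu_0 A)\) for \(\mu \ge \mu_0\), and only then pass to the limit. Your ``reverse inequality'' is also misargued --- the effective representatives of \(D+\delta A+\epsilon F\) are not a subset of those of \(D+\delta A\); the correct point is that once \(\tfrac{\delta}{2}A-\epsilon F\) is \(\pi\)-ample one has \(\sigma_V(D+\delta A)\le\sigma_V(D+\epsilon F+\tfrac{\delta}{2}A)\le\sigma_V(D+\epsilon F)\), and then \(\delta\to 0\).

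In part (3) you candidly flag that your convex-combination scheme does not close, and indeed it cannot: \(\sigma_\Gamma(D+\eta A;X/S)\) increases as \(\eta\to 0\), so the fact that \(\Gamma\) avoids the support of one representative of \(D+A\) gives no upper bound on \(\sigma_\Gamma(D;X/S)\) at all, and no amount of iterating the estimate \(\sigma_\Gamma(D+\eta A)\le(1-\eta)\sigma_\Gamma(D)+\cdots\) recovers the claim. The missing idea is Nakayama's: one shows that prime divisors \(\Gamma_1,\dots,\Gamma_k\) with \(\sigma_{\Gamma_j}(D;X/S)>0\) have linearly independent classes in \(N^1(X/S)\) (roughly, a linear relation among them would let one subtract \(\epsilon\sum a_j\Gamma_j\) from \(D\) while keeping it \(\pi\)-pseudoeffective and strictly decreasing some \(\sigma_{\Gamma_j}\), contradicting minimality; this uses the finiteness of each \(\sigma_{\Gamma_j}(D;X/S)\), which you correctly reduce to Theorem~\ref{finiteconds}(3)). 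The count is then bounded by \(\dim N^1(X/S)\), not by the number of components of any particular effective divisor.
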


The first of these shows that Definition~\ref{sigmadef} is independent of the choice of \(\pi\)-ample divisor \(A\), while the second completes the proof of Theorem~\ref{sigmaproperties}.

\begin{definition}[\cite{nakayama}, \cite{boucksomzariski}]
Suppose that \(\pi : X \to S\) is a nice family and that \(D\) is a \(\pi\)-pseudoeffective divisor such that \(\sigma_{\Gamma}(D;X/S)\) is finite for every prime divisor \(\Gamma\).  Then set
\begin{align*}
N_\sigma(D;X/S) &= \sum_\Gamma \sigma_\Gamma(D;X/S) \, \Gamma, \\ P_\sigma(D;X/S) &= D - N_\sigma(D;X/S).
\end{align*}
It follows from Proposition~\ref{sigmaproperties}(3) that there are only finitely many nonzero terms in the sum defining \(N_\sigma(D;X/S)\).
\end{definition}

We refer to \(N_\sigma(D;X/S)\) as the negative part of the Zariski decomposition, and \(P_\sigma(D;X/S)\) as the positive part.  The negative part is a rigid, effective divisor.  The positive part might not be nef, but it lies in the closure \(\Movb(X/S)\) of the cone \(\Mov(X/S)\).  Corollary~\ref{nodzd} shows that without the finiteness hypothesis on \(\sigma_\Gamma(D;X/S)\), the definition is not always applicable in the relative setting.

An equivalent approach to defining this decomposition is given by Kawamata via the \emph{numerically fixed part} of a linear series~\cite{kawamataremarks}.
\begin{definition}
Suppose that \(\pi : X \to S\) is a nice family.  Then
\[
N_\sigma(D;X/S) = \lim_{\epsilon \to 0} \left( \inf {D^\prime : D^\prime \equiv_\pi D+\epsilon A, D^\prime \geq 0} \right)
\]
where the infimum of divisors is defined coefficient-wise.
\end{definition}

In the non-relative setting, the divisorial Zariski decomposition is defined for any pseudoeffective class \(D\), but it lacks certain useful properties of the classical Zariski decomposition in dimension \(2\).  In particular, the failure of the positive part \(P\) to be nef can be problematic. It is often useful to try to construct a birational model \(f : W \to X\) for which \(P_\sigma(f^\ast D)\) is actually nef.  This suggests that higher-dimensional versions of Zariski decomposition should allow passage to a higher birational model. There are several possible definitions, among them the weak Zariski decomposition of Birkar.

\begin{definition}[\cite{birkar2009}]
Suppose that \(\pi : X \to S\) is a nice family and that \(D\) is a pseudoeffective divisor on \(X\).  We say that \(D\) admits a \emph{weak Zariski decomposition over $S$} if there exists a birational map \(f : Y \to X\) and a decomposition \(f^\ast D = P +N \), where \(P\) is \(\pi\)-nef and \(N\) is effective.
\end{definition}
This condition is fairly unrestrictive in that it does not impose any analog of the negative-definiteness required in the two-dimensional setting.  Nevertheless, there exist pseudoeffective \(\R\)-divisors on smooth threefolds which do not admit a weak Zariski decomposition~\cite{jdlbminus}.  Corollary~\ref{nowzd} asserts that the divisor \(\tilde{D}\) provides another such example.  Indeed, \(\tilde{D}\) admits no Zariski decomposition in a still stronger sense: even after pulling back to a higher model, it cannot be decomposed as the sum of an effective divisor and a relatively movable divisor.  The example is qualitatively rather different from that of~\cite{jdlbminus}: there, a certain pseudoeffective divisor \(D_\lambda\) has negative intersection with infinitely many curves; here, there is a single curve on which \(D\) is negative, but the multiplicity of \(D\) along this curve is infinite.

\section{Main example}
\label{examplesect}

The claimed pathology follows from a few calculations on an example that has been studied by Kawamata and Reid.  Let \(\pi : X \to S\) be the versal deformation space of a fiber of Kodaira type \(I_2\).  The base \(S\) is smooth, \(2\)-dimensional germ.  The fiber over the central point \(0 \in S\) is consists of two smooth rational curves \(C_1\) and \(C_2\), meeting transversely at two points \(p_1\) and \(p_2\).  Let \(C = \pi^{-1}(0)\) be the union of these two curves.

There are two divisors \(\Gamma_1, \Gamma_2 \subset S\) corresponding to the smoothings of the two nodes of \(C\).  The fiber of \(\pi\) over a general point of \(\Gamma_i\) is a nodal rational curve, while the fiber over a general point of \(S\) is a smooth curve of genus \(1\).  

% To anyone reading this who actually knows how to use TikZ, please forgive me for what follows:
\begin{figure}[htb]
\centering
\begin{tikzpicture}[scale=0.8]
\def\perang{70}; \def\perlen{2};
\coordinate (si) at (\perlen,0); 
\coordinate (up) at ({\perlen*cos(\perang)},{\perlen*sin(\perang)});
\coordinate (ve) at (0,3);

\draw [thick] ($-1*(up)$) -- (up);
\draw [thick] ($-1*(si)$) -- (si);

\draw [ultra thin] ($(up)-(si)$) -- ($(up)+(si)$);
\draw [ultra thin] ($-1*(up)-(si)$) -- ($-1*(up)+(si)$);
\draw [ultra thin] ($-1*(si)-(up)$) -- ($-1*(si)+(up)$);
\draw [ultra thin] ($(si)-(up)$) -- ($(si)+(up)$);

\node [right] (G1) at (si) {$\Gamma_1$};
\node [below] (G2) at ($-1*(up)$) {$\Gamma_2$};

\begin{scope}[shift=(ve)]
\draw (-1/4,0) .. controls (5/12,2/3) and (5/12,4/3).. (-1/4,2);
\draw (1/4,0) .. controls (-5/12,2/3) and (-5/12,4/3).. (1/4,2);
\node [left] (C1) at (-0.1,3/2) {$C_1$};
\node [right] (C2) at (0.1,3/2) {$C_2$};
\end{scope}
\draw [densely dotted] (0,0) -- (ve);

\begin{scope}[shift=($-1*(up)+4/3*(ve)$)]
\draw (-1/4,0) .. controls (3/2,2) and (-3/2,2) .. (1/4,0);
\end{scope}
\draw [densely dotted] ($-1*(up)$) -- ($-1*(up)+4/3*(ve)$);

\begin{scope}[shift=($-1*(si)+(ve)$)]
\draw (-1/4,3/2) .. controls (3/2,-1/2) and (-3/2,-1/2) .. (1/4,3/2);
\end{scope}
\draw [densely dotted] ($-1*(si)$) -- ($-1*(si)+0.95*(ve)$);

\begin{scope}[shift=($1/2*(si)+1/2*(up)+3/4*(ve)$)]
\draw (0,0) .. controls (3/4,3) and (1/4,-3/2) .. (1,3/2);
\end{scope}
\draw [densely dotted] ($1/2*(si)+1/2*(up)+(1/2,0)$) -- ($1/2*(si)+1/2*(up)+0.85*(ve)+(1/2,0)$);
\end{tikzpicture}
\caption{The family \(\pi : X \to S\)}
\end{figure}

\begin{lemma}
The normal bundle \(N_{C_i/X}\) is isomorphic to \(\cO_{\P^1}(-1) \oplus \cO_{\P^1}(-1)\). 
\end{lemma}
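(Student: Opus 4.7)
The plan is to determine $N_{C_i/X}$ in two stages: first I compute its total degree using adjunction, then I pin down the splitting type by examining transition functions in analytic charts at the two nodes.

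Because the general fiber of $\pi$ is a smooth elliptic curve, the Kodaira canonical bundle formula gives $K_{X/S} \equiv_\pi 0$, so $K_X \cdot C_i = \pi^\ast K_S \cdot C_i = 0$. Adjunction for the smooth rational curve $C_i$ in the smooth threefold $X$ then gives
\[
-2 \;=\; \deg K_{C_i} \;=\; K_X \cdot C_i + \deg \det N_{C_i/X},
\]
so $\deg N_{C_i/X} = -2$, and we may write $N_{C_i/X} \cong \cO_{\P^1}(a) \oplus \cO_{\P^1}(b)$ with $a + b = -2$.

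For the splitting I work locally at each node. By the versality of $\pi$ at each node of the $I_2$ fiber together with the transversality of $\Gamma_1$ and $\Gamma_2$ at $0 \in S$, I can pick analytic coordinates $(s_1, s_2)$ on $S$ with $\Gamma_j = \{s_j = 0\}$ and analytic charts $U_1 \cong \A^3_{x,y,s_2}$ around $p_1$ with $\pi|_{U_1}(x,y,s_2) = (xy, s_2)$ and $U_2 \cong \A^3_{x',y',s_1}$ around $p_2$ with $\pi|_{U_2}(x',y',s_1) = (s_1, x'y')$. In these coordinates $C_i$ is cut out by $\{x = s_2 = 0\}$ on $U_1$ and by $\{y' = s_1 = 0\}$ on $U_2$. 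Compatibility of $\pi$ on $U_1 \cap U_2$ forces the coordinate change $x' = 1/y,\ y' = ys_2,\ s_1 = xy$; restricting $ds_1 = y\,dx + x\,dy$ and $dy' = s_2\,dy + y\,ds_2$ to $C_i$ (where $x = s_2 = 0$) yields
\[
ds_1|_{C_i} = y\,dx|_{C_i}, \qquad dy'|_{C_i} = y\,ds_2|_{C_i}.
\]
Hence the conormal trivializations $(dx, ds_2)$ on $U_1$ and $(ds_1, dy')$ on $U_2$ are related by the \emph{scalar} transition $y \cdot I_2$, which forces the conormal bundle to split as $L \oplus L$ for a single line bundle $L$ on $C_i \cong \P^1$. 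Since the conormal has total degree $+2$, each summand has degree $1$, so $L \cong \cO_{\P^1}(1)$ and therefore $N_{C_i/X} \cong \cO_{\P^1}(-1) \oplus \cO_{\P^1}(-1)$.

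The main obstacle is justifying the precise form of the two local analytic charts: one needs the versality of $\pi$ at each node (to obtain the $xy = s_j$ normal form of the local smoothing) together with the transversality of $\Gamma_1$ and $\Gamma_2$ (so that $(s_1, s_2)$ really serve as coordinates on $S$). Once these charts are in place, the transition computation and identification of the line bundle are mechanical.
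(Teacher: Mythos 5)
Your first step (adjunction plus $K_X \cdot C_i = 0$, giving $\deg N_{C_i/X} = -2$) is correct and is genuinely different from the paper, which never computes the degree separately. The paper instead uses the elementary-modification sequence
\[
0 \to N_{C_1/X} \to (N_{C/X})\vert_{C_1} \to T_{C_2,p_1}\oplus T_{C_2,p_2} \to 0
\]
of Graber--Harris--Starr: the middle term is the trivial bundle $\cO_{C}\oplus\cO_C$ restricted to $C_1$ (because $C$ is the fiber over a smooth point of a $2$-dimensional base), and versality is used only to see that the two coordinate directions on $S$ smooth the two nodes independently, so the surjection onto the skyscrapers is ``diagonal'' and the kernel is $\cO(-p_1)\oplus\cO(-p_2)$. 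That argument is purely local at the two nodes and needs no choice of charts along $C_i$.

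The second half of your argument has a genuine gap, and it sits exactly where you flagged it. Two separate things are asserted without proof. First, versality and transversality give the normal form $\pi=(xy,s_2)$ only in a small analytic neighborhood of the node $p_1$; you need the chart $U_1$ to contain all of $C_i\setminus\{p_2\}$ (so that $U_1\cap C_i$ and $U_2\cap C_i$ cover $\P^1$), and such a \emph{semi-global} product normal form along the whole affine curve does not follow from the local statement at the node --- it requires an additional patching argument (or several more charts over the locus where $\pi$ is smooth, at which point the transition data becomes a genuine \v{C}ech computation rather than a single matrix). Second, even granting the two charts, compatibility of $\pi$ does \emph{not} force the coordinate change to be exactly $x'=1/y$, $y'=ys_2$, $s_1=xy$: it only forces $x'y'=s_2$ and $s_1=xy$, which leaves considerable freedom in $x'$ and $y'$. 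One can in fact show that any coordinate change satisfying these constraints induces the scalar transition $y\cdot I_2$ on $I_{C_i}/I_{C_i}^2$ (writing $y'=\alpha x+\beta s_2$ and using that $x'\beta\equiv 1$ and $x'\alpha\equiv 0$ modulo the ideal of $C_i$), but that verification is precisely the content of the lemma and is missing from your write-up. As written, the step ``the transition is scalar, hence the bundle is $L\oplus L$'' rests on an unproved normalization; either supply the invariance argument just sketched, or work with the explicit equations of Reid's model $\bar X\subset(\P^1\times\P^1)\times\A^2$ given later in the paper, or switch to the paper's exact-sequence argument, which bypasses charts entirely.
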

\begin{proof}
Suppose for simplicity that \(i = 1\).  There is an exact sequence
\[
\xymatrix{
0 \ar[r] & N_{C_1/X} \ar[r] & (N_{C/X})\vert_{C_1} \ar[r] & T_{C_2,p_1} \oplus T_{C_2,p_2} \ar[r] & 0
}
\]
with the property that a first-order deformation, determined by a section \(s \in H^0(C,N_{C/X})\) smooths the node at \(p_i\) if and only if \(s\) has nonzero image \(T_{C_2,p_i}\)~\cite[Lemma 2.6]{graberharrisstarr}.   The sheaf in the middle is the trivial \(\cO_C \oplus \cO_C\). In one direction \(p_1\) is smoothed, and in another \(p_2\) is, so the map sends \((1,0)\) to \((1,0)\) and \((0,1)\) to \((0,1)\) with respect to the direct sum decompositions. It follows that the kernel is \(\cO_{\P^1}(-1) \oplus \cO_{\P^1}(-1)\).
\end{proof}

\begin{lemma}
\label{flopandiso}
There exists a flop \(\tau : X \rat X^+/S\) with flopping curve \(C_1\).  Let \(C_1^+ \subset X^+\) be the flopped curve, and \(C_2^\prime \subset X^+\) be the strict transform of \(C_2\). There exists an isomorphism \(\sigma : X^+ \to X/S\) which sends \(C_1^+\) to \(C_1\) and \(C_2^\prime\) to \(C_2\).  Furthermore, there exists an automorphism \(\imath : X \to X/S\) which exchanges the two curves \(C_1\) and \(C_2\).
\end{lemma}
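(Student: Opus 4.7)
The plan is to construct the flop $\tau$ directly from the normal bundle computation of the preceding lemma, and then to exploit the versal property of $\pi: X \to S$ to produce the isomorphisms $\sigma$ and $\imath$.

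First I would construct the flop. Since $N_{C_1/X} \cong \cO_{\P^1}(-1)^{\oplus 2}$, the curve $C_1$ is a standard $(-1,-1)$-curve and admits the Atiyah flop: blowing up $C_1$ yields an exceptional divisor $E \cong \P^1 \times \P^1$, which may be contracted along its second ruling to produce a smooth variety $X^+$ over $S$ with a new flopped curve $C_1^+$, giving the birational map $\tau: X \rat X^+/S$. Equivalently, $C_1$ can first be contracted to an ordinary double point in a singular variety $\bar X/S$, whose two small resolutions are $X$ and $X^+$. Since $\tau$ is an isomorphism away from $C_1$, the strict transform $C_2^\prime = \overline{\tau(C_2 \setminus \{p_1,p_2\})}$ is a smooth rational curve meeting $C_1^+$ transversely at two points, so the central fiber of $\pi^+ : X^+ \to S$ remains a configuration of Kodaira type $I_2$.

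For the isomorphism $\sigma$, I would argue that $\pi^+: X^+ \to S$ is itself a versal deformation of its $I_2$ central fiber. Since the versal deformation of an $I_2$ curve is unique up to isomorphism and its base is $2$-dimensional (matching $\dim S$), the miniversal property supplies an isomorphism $\sigma : X^+ \to X$ of families over $S$; composing with the $\mathbb{Z}/2$ symmetry of the central fiber if necessary, I can arrange that $\sigma(C_1^+) = C_1$ and $\sigma(C_2^\prime) = C_2$. For $\imath$ the same strategy applies: the involution $\alpha$ of $C_1 \cup C_2$ that exchanges the two components fixes both nodes $p_1, p_2$ as points, and hence preserves each smoothing locus $\Gamma_i \subset S$. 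By versality it extends to an automorphism $\imath: X \to X$ over the identity on $S$ exchanging $C_1$ and $C_2$.

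The main obstacle is ensuring that the isomorphisms produced by versality sit over the identity on $S$, rather than over a nontrivial automorphism of the germ. This reduces to checking that the central-fiber symmetries in play preserve the stratification of $S$ by the discriminant divisors $\Gamma_1, \Gamma_2$ (they do, since each smoothing direction is intrinsically determined by the node it resolves), after which the miniversal property of the deformation pins down the induced map on $S$ as the identity.
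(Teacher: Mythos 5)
Your construction of the flop itself is fine and close in spirit to the paper's (the paper instead produces $\tau$ as a $(K_{\bar X/\bar S}+\epsilon\bar\Delta)$-flip for a suitable klt pair, which has the advantage of guaranteeing projectivity over $S$ for free, but for a $(-1,-1)$-curve over a germ the Atiyah flop is legitimate). The genuine gap is in your argument that $\sigma$ (and likewise $\imath$) can be taken to lie over the \emph{identity} of $S$. You correctly flag this as the main obstacle, but your proposed resolution does not work: versality applied to $\pi^+ : X^+ \to S$ only produces a classifying map $g : S \to S$ with $X^+ \cong g^*X$, and neither miniversality nor compatibility with the discriminant stratification forces $g = \mathrm{id}$. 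The deformation of an $I_2$ fiber is versal but not universal (the central fiber has a positive-dimensional automorphism group), so the classifying map is not unique; and there are plenty of nontrivial automorphisms of the germ $(S,0)$ preserving each branch $\Gamma_i$ of the discriminant (e.g.\ independent rescalings of the two smoothing parameters). So "preserves the stratification, hence is the identity" is a non sequitur, and this is exactly the point the paper says must be handled: it explicitly warns that the isomorphism $\beta : X^+ \to X$ supplied by versality "might not be defined over the identity map on $S$."

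The paper closes this gap with an additional geometric argument that your proposal is missing. It constructs a multisection $\Sigma_1$ of $\pi$ (an explicit section of Reid's model $x_1^2 = ((x_2-a_1)^2-t_1)((x_2-a_2)^2-t_2)$), uses translation by a (multi)section in the smooth elliptic fibers to produce a birational automorphism $\gamma : X \rat X$ over the identity of $S$ carrying $\Sigma_1$ to $\Sigma_2 = \beta_*\tau_*\Sigma_1$, and then identifies $\pi\circ\gamma$ with the minimal model $\pi^+ : X^+ \to S$ by matching the numerical classes of the strict transforms of $\Sigma_1$; uniqueness of the relevant minimal model then yields $\sigma$ over the identity of $S$. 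For $\imath$ the paper avoids versality entirely: the involution $(x_1,x_2)\mapsto(-x_1,x_2)$ of the explicit equation is visibly an automorphism over the identity of the base exchanging $C_1$ and $C_2$. To repair your proof you would need either such an explicit model or some substitute argument (e.g.\ via translations on the associated Jacobian fibration) that produces the required maps over $\mathrm{id}_S$ rather than over an unknown automorphism of the germ.
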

\begin{proof}
The arguments here are due to Kawamata~\cite[Example 3.8(2)]{kawamatacy}.  We make some aspects of the proof explicit by working with local defining equations given by Reid~\cite{reid}.  In what follows, we use the notation \(\bar{\cdot}\) to denote objects on a family \(\bar{\pi} : \bar{X} \to \bar{S}\) over an affine base, while objects with no bar will be the restrictions to a certain germ.

Let \(\bar{S} = \A^2\), with coordinates \(t_1\) and \(t_2\).  Fix two distinct complex numbers \(a_1\) and \(a_2\) and define \(\bar{X}_0 \subset (\A^1 \times \A^1) \times \bar{S}\) by the equation
\[
x_1^2 = ((x_2-a_1)^2 - t_1)((x_2-a_2)^2-t_2).
\]

The closure \(\bar{X} \subset (\P^1 \times \P^1) \times \bar{S}\) is smooth, and the second projection \(\bar{\pi} : \bar{X} \to \bar{S}\) is proper.  The fiber of \(\bar{\pi}\) over a general point \((t_1,t_2)\) is a smooth curve of genus \(1\).   If exactly one of \(t_1\) and \(t_2\) is zero, the fiber is nodal, while if \(t_1 = t_2 = 0\), the fiber is given by \(x_1^2 = (x_2-a_1)^2(x_2-a_2)^2\).  This central fiber has two components, the rational curves \(C_1\) defined by \(x_1 = -(x_2-a_1)(x_2-a_2)\) and \(C_2\) defined by \(x_1 = (x_2-a_1)(x_2-a_2)\).  The restriction of \(\bar{\pi} : \bar{X} \to \bar{S}\) to the germ at \((0,0) \in \bar{S}\) is the versal deformation space \(\pi : X \to S\) considered above.  The involution \(\imath : \bar{X} \to \bar{X}/\bar{S}\) defined by \(\imath(x_1,x_2) =  (-x_1,x_2)\) exchanges the two components of the central fiber.

There is a section \(\bar{\sigma} : \bar{S} \to \bar{X}\) given by
\begin{align*}
x_2(t_1,t_2) &= \frac{a_1+a_2}{2} - \frac{t_1-t_2}{2(a_1-a_2)}, \\
x_1(t_1,t_2) &= (x_2(t_1,t_2)-a_1)^2 - t_1.
\end{align*}
This has \(\bar{\sigma}(0,0) = \left( \frac{(a_2-a_1)^2}{4}, \frac{1}{2}(a_1+a_2) \right)\), which lies on \(C_1\) and is disjoint from \(C_2\).  

Let \(\bar{\Sigma}_1\) be the divisor \(\sigma(\bar{S})\).  Since \(\bar{\Sigma}_1 \cdot C_1 = 1\) and \(\bar{\Sigma}_1 \cdot C_2 = 0\), the curves \(C_1\) and \(C_2\) have distinct classes in \(N_1(\bar{X}/\bar{S})\).  Since all other fibers of \(\bar{\pi}\) are irreducible, it must be that \(N^1(\bar{X}/\bar{S})\) is has dimension \(2\).  The divisor \(2 \imath_\ast(\bar{\Sigma}_1) - \bar{\Sigma}_1\) has positive degree on general fibers, and so is \(\bar{\pi}\)-big.  Since \(\bar{S}\) is affine, there is an effective divisor \(\bar{\Delta}\) representing this class.  For sufficiently small \(\epsilon\), the pair \((\bar{X},\epsilon \bar{\Delta})\) is klt.  Since \(\bar{\Delta} \cdot C_1 < 0\), there exists a \((K_{\bar{X}/\bar{S}}+\epsilon \bar{\Delta})\)-flip \(\tau : \bar{X} \rat \bar{X}^+\), which is a \(K_{\bar{X}/\bar{S}}\)-flop. The map \(\bar{\pi}^{+} : \bar{X}^+ \to \bar{S}\) is a minimal model of \(\bar{X}^+\).  The strict transform of \(\bar{\Sigma}_1\) on \(\bar{X}^+\) is smooth, contains the curve \(C_1^+\), and satisfies \(\tau_\ast \bar{\Sigma}_1 \cdot C_2^\prime = 2\).  

Since \(\pi : X \to S\) is a versal deformation space and \(\pi^+ : X^+ \to S\) has the same local structure, there exists an isomorphism \(\beta : X^+ \to X\) over \(S\).  However, this map might not be defined over the identity map on \(S\). The divisor \(\Sigma_2 = \beta_\ast(\tau_\ast(\Sigma_1))\) is a smooth divisor on \(X\), containing \(C_1\), and meeting \(C_2\) at two points.  There is a translation on the smooth fibers of \(\pi\) sending \(\Sigma_1\) to \(\Sigma_2\), which defines a birational automorphism \(\gamma : X \rat X\) over the identity on \(S\). The map \(\pi \circ \gamma : X \to S\) must be isomorphic to some minimal model of \(X\) over \(S\), and indeed must be to isomorphic to \(\pi^+ : X^+ \to S\) since the strict transforms of \(\Sigma_1\) under \(\gamma\) and \(\tau\) have the same numerical classes. It follows that there exists an isomorphism \(\sigma : X^+ \to X\) over the identity of \(S\).  Replacing \(\sigma\) with \(\sigma \circ \imath\) if necessary, we may assume that \(\sigma(C_1^+) = C_1\) and \(\sigma(C_2^\prime) = C_2\), as required.
\end{proof}

Each of the maps \(\sigma \circ \tau\) and \(\imath\) is a birational involution of \(X\) over \(S\), but we will soon see that the composition \(\phi = (\sigma \circ \tau) \circ \imath\) is of infinite order.  Since \(\imath(C_2) = C_1\), the effect of repeatedly applying \(\phi\) is to flop \(C_1\), then \(C_2\), then \(C_1\) again, and so on. We will denote by \(\phi_\ast D\) the strict transform of a divisor \(D\) under a birational map \(\phi\), and use the same notation for the induced map on numerical groups when confusion seems unlikely.

\begin{figure}[htb]
\centering
\begin{tikzpicture}[scale=0.6]
\def\perang{75}; \def\perlen{2};
\coordinate (si) at (\perlen,0); 
\coordinate (up) at ({\perlen*cos(\perang)},{\perlen*sin(\perang)});
\newcommand{\basespace}{%
\draw [thick] ($(sh)-(up)$) -- ($(sh)+(up)$);
\draw [thick] ($(sh)-(si)$) -- ($(sh)+(si)$);
\draw [ultra thin] ($(sh)+(up)-(si)$) -- ($(sh)+(up)+(si)$);
\draw [ultra thin] ($(sh)-(up)-(si)$) -- ($(sh)-(up)+(si)$);
\draw [ultra thin] ($(sh)-(si)-(up)$) -- ($(sh)-(si)+(up)$);
\draw [ultra thin] ($(sh)+(si)-(up)$) -- ($(sh)+(si)+(up)$);
}
\coordinate (x0center) at (0,0);
\coordinate (x1center) at (16,0);
\coordinate (rescenter) at (8,3);

\coordinate (sh) at (x0center);
\basespace
\coordinate (sh) at (x1center);
\basespace
\coordinate (sh) at (rescenter);
\basespace

\draw[->, shorten >=2cm, shorten <=2cm] (rescenter) -- (x0center) node [midway, above] {$f$};
\draw[->, shorten >=2cm, shorten <=2cm] (rescenter) -- (x1center) node [midway, above] {$g$};
\draw[->, dashed, shorten >=2cm, shorten <=2cm] (x0center) -- (x1center) node [midway, below] {$\tau$};

\newcommand{\centralfiber}{%
\coordinate (ve) at (0,3);
\draw [very thick] ($(sh)+(-1/4,0)+(ve)$) .. controls ($(sh)+(5/12,2/3)+(ve)$) and ($(sh)+(5/12,4/3)+(ve)$).. ($(sh)+(-1/4,2)+(ve)$);
\draw [very thick] ($(sh)+(1/4,0)+(ve)$) .. controls ($(sh)+(-5/12,2/3)+(ve)$) and ($(sh)+(-5/12,4/3)+(ve)$).. ($(sh)+(1/4,2)+(ve)$);
\draw [densely dotted] (sh) -- ($(sh)+(ve)$);
}
\coordinate (sh) at ($(x0center)$);
\centralfiber
\node [left] (C1) at ($(sh)+(ve)+(-0.1,3/2)$) {$C_1$};
\node [right] (C2) at ($(sh)+(ve)+(0.1,3/2)$) {$C_2$};
\coordinate (sh) at ($(x1center)$);
\centralfiber
\node [left] (C1) at ($(sh)+(ve)+(-0.1,3/2)$) {$C_1^+$};
\node [right] (C2) at ($(sh)+(ve)+(0.1,3/2)$) {$C_2^\prime$};

\begin{scope}[shift=($(rescenter)+(ve)$)]
\begin{scope}[rotate=45]
\draw[step=0.625cm] (0,0) grid (5/2,5/2);
\draw[very thick] (0,0) -- (5/2,0);
\draw[very thick] (0,0) -- (0,5/2);
\node [draw,shape=circle,fill=black,inner sep=0pt,minimum width=1mm] (tt) at (5/8,5/8) {};
\node [draw,shape=circle,fill=black,inner sep=0pt,minimum width=1mm] (tt) at (15/8,15/8) {};
%\draw [very thick] (5/8,5/8) .. controls (15/8,5/4) and (5/4,5/8) .. (15/8,15/8);
\draw [very thick] plot [smooth,tension=0.8] coordinates {(-5/8,5/8) (5/8,5/8) (25/16,15/16) (15/8,15/8) (15/8,25/8)};
\end{scope}
\node [left] (C1r) at (135:5/4) {$\bar{C}_1$};
\node [right] (C1s) at (45:5/4) {$\bar{C}_1^+$};
\node [left] (C2p) at (90:15/4) {$\bar{C}_2^\prime$};
\end{scope}
\draw [densely dotted, shorten >=1mm] (rescenter) -- ($(rescenter)+(ve)$);

\end{tikzpicture}
\caption{Resolution of the flop $\tau$}
\end{figure}
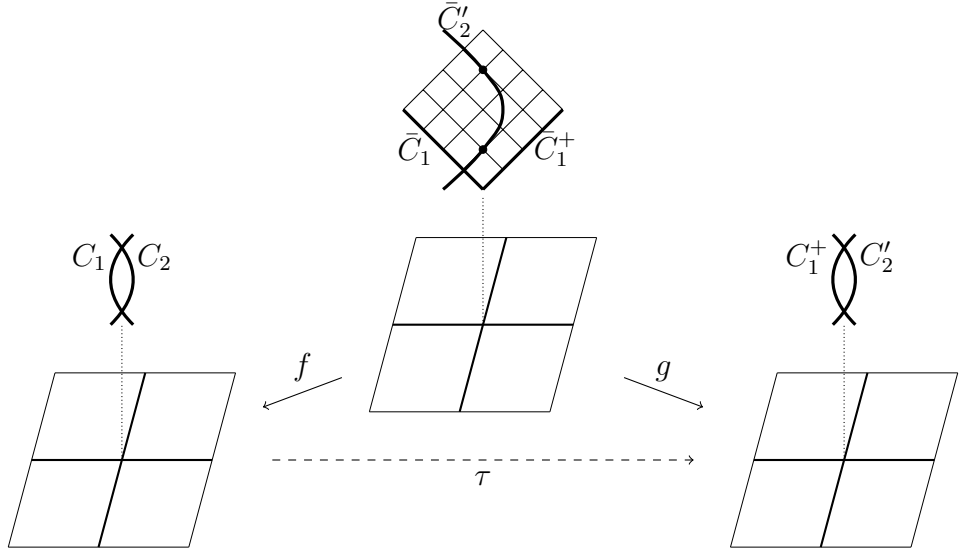

To an effective divisor \(D\) on \(X\), associate the \(4\)-tuples
\begin{align*}
v_D &= (D \cdot C_1,D \cdot C_2,\mult_{C_1}(D),\mult_{C_2}(D)), \\ 
\sigma_D &= (D \cdot C_1,D \cdot C_2,\sigma_{C_1}(D;X/S),\sigma_{C_2}(D;X/S)).
\end{align*}
\begin{lemma}
\label{multmatrix}
Suppose that \(D\) is a divisor on \(X\), and let \(\tilde{D}\) denote the strict transform of \(D\) under the flop \(\tau : X \rat X^+\).  Then
\begin{enumerate}
\item \(\tilde{D} \cdot C_1^+ = -D \cdot C_1\),
\item \(\tilde{D} \cdot C_2^\prime = D \cdot C_2 + 2( D \cdot C_1)\),
\item \(\mult_{C_1^+}(\tilde{D}) = \mult_{C_1}(D) + D \cdot C_1\),
\item \(\mult_{C_2^\prime}(\tilde{D}) = \mult_{C_2}(D)\).
\end{enumerate}
In matrix form, we have \(v_{\phi_{\ast}{D}} = M v_D\) and \(\sigma_{\phi_{\ast}D} = M \sigma_D\) where
\[
M = \left( \begin{array}{rrrr} 2 & 1 & 0 & 0 \\ -1 & 0 & 0 & 0 \\ 0 & 0 & 0 & 1  \\ 1 & 0 & 1 & 0 
\end{array} \right)
\]
\end{lemma}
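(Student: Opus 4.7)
I would prove the lemma by resolving the flop $\tau$ on a common smooth model and performing all intersection calculations there, then composing linear effects to obtain the matrix $M$.

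Step one is to construct the resolution: since $N_{C_1/X} \cong \cO_{\P^1}(-1)^{\oplus 2}$, the blowup $f : W \to X$ of $X$ along $C_1$ has exceptional divisor $E \cong \P^1 \times \P^1$ with $\cO_E(E) \cong \cO(-1,-1)$, and contracting the other ruling produces a morphism $g : W \to X^+$ that realizes the flop. Let $\ell$ be a fiber of $f|_E$ (contracted by $f$, mapped isomorphically onto $C_1^+$ by $g$) and $\ell^+$ a fiber of $g|_E$ (contracted by $g$, mapped isomorphically onto $C_1$ by $f$); both satisfy $E \cdot \ell = E \cdot \ell^+ = -1$.

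Step two is to write the two pullback identities and read off (1)--(4). Since $\tau$ is small, the strict transform $\hat{D}$ of $D$ on $W$ agrees with that of $\tilde{D}$, so
\[
f^* D = \hat{D} + m_1 E, \qquad g^* \tilde{D} = \hat{D} + m_1^+ E,
\]
with $m_1 = \mult_{C_1}(D)$ and $m_1^+ = \mult_{C_1^+}(\tilde{D})$. Subtracting yields $g^* \tilde{D} - f^* D = (m_1^+ - m_1) E$. Intersecting with $\ell^+$ (and using the projection formula with $f_* \ell^+ = C_1$, $g_* \ell^+ = 0$) gives $m_1^+ = m_1 + D \cdot C_1$, which is (3); intersecting with $\ell$ gives $\tilde{D} \cdot C_1^+ = m_1 - m_1^+ = -D \cdot C_1$, which is (1). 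For (2), let $\tilde{C}_2 \subset W$ be the strict transform of $C_2$: since $C_2$ meets $C_1$ transversely at two points we have $\tilde{C}_2 \cdot E = 2$, with $f_* \tilde{C}_2 = C_2$ and $g_* \tilde{C}_2 = C_2^\prime$, so pairing $g^* \tilde{D}$ with $\tilde{C}_2$ yields $\tilde{D} \cdot C_2^\prime = D \cdot C_2 + 2(m_1^+ - m_1) = D \cdot C_2 + 2\, D \cdot C_1$. Claim (4) is immediate because $\tau$ is an isomorphism on an open neighborhood of the generic point of $C_2$.

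Step three is the matrix assembly: $\imath$ permutes entries $1 \leftrightarrow 2$ and $3 \leftrightarrow 4$ (it swaps $C_1$ and $C_2$), $\tau$ transforms by (1)--(4), and $\sigma$ acts trivially on numerical data because it identifies $C_1^+ \to C_1$ and $C_2^\prime \to C_2$; composing the three matrix actions in the appropriate order produces $M$. The same $M$ describes the action on $\sigma_D$: identities (1) and (2) are numerical, and (3)--(4) transfer from $\mult_V$ to $\sigma_V$ because $\tau_*$ sets up a bijection between effective $\R$-divisors representing $D$ on $X$ and representing $\tilde{D}$ on $X^+$, under which the coefficient-wise equalities are preserved and hence survive the infimum of Definition~\ref{sigmadef} (and the subsequent $\epsilon \to 0$ limit, if $D$ is only pseudoeffective). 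I expect the main obstacle to be careful bookkeeping of signs, rulings, and pushforwards across the two small contractions for (1)--(3); once this is handled cleanly, claim (4) and the extension from $\mult$ to $\sigma$ are essentially formal.
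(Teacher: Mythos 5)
Your proof is correct and follows essentially the same route as the paper: resolve the flop on the graph $W$ (the blow-up of the $(-1,-1)$-curve $C_1$), write $g^\ast \tilde{D} - f^\ast D = aE$ with $a = D\cdot C_1$, and intersect with the two rulings of $E \cong \P^1\times\P^1$ and the strict transform of $C_2$ to get (1)--(4), then compose with the swap induced by $\imath$. The only (harmless) divergence is in passing from $\mult$ to $\sigma$: the paper takes limits of general members of $\abs{mD}$, while you use the bijection that $\tau_\ast$ induces on effective representatives, which if anything is better adapted to the infimum in Definition~\ref{sigmadef} in the relative $\R$-divisor setting.
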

\begin{proof}
Let \(W\) be the graph of the flop \(\tau\):
\[
\xymatrix{
& W \ar[dl]_f \ar[dr]^g \\ X \ar@{-->}^{\tau}[rr] && X^+ }
\]
Since \(\tau\) is the flop of a rational curve with normal bundle \(\cO_{\P^1}(-1) \oplus \cO_{\P^1}(-1)\), there is a single \(f\)-exceptional divisor \(E\) on \(W\), which is isomorphic to \(\P^1 \times \P^1\) and has normal bundle of bidegree \((-1,-1)\).  Let \(\bar{C}_1\) be a ruling of \(E\) contracted by \(g\), so that \(f\) sends \(\bar{C}_1\) isomorphically to \(C_1\).  Similarly, let \(\bar{C}_1^+\) be a a ruling of \(E\) contracted by \(f\), so that \(g\) maps \(\bar{C}_1^+\) isomorphically onto \(C_1^+\).  Lastly, let \(\bar{C}_2^\prime\) be the strict transform of \(C_2\) on \(W\), a curve which meets \(E\) transversely at \(2\) points. Then write
\[
f^\ast D + aE = g^\ast \tilde{D},
\]
for some constant \(a\).  Taking the intersection of both sides with \(\bar{C}_1\) yields \(D \cdot C_1 + a(E \cdot \bar{C}_1) = 0\).  Since \(E \cdot \bar{C}_1 = -1\), we obtain \(a = D \cdot C_1\).  Intersecting with \(\bar{C}_1^+\), we have \(-a = \tilde{D} \cdot C_1^+\).  Similarly, intersecting with \(\bar{C}_2^\prime\), we have \(D \cdot C_2 + a (E \cdot \bar{C}_2^\prime) = \tilde{D} \cdot C_2^\prime\), and since \(E \cdot \bar{C}_2^\prime = 2\), we have (2).  It is clear that \(\mult_{C_2^\prime}(\tilde{D}) = \mult_{C_2}(D)\), since \(\tau\) is an isomorphism at the generic point of \(C_2\).  Finally,
\[
\mult_{C_1^+}(\tilde{D}) = \mult_E(g^\ast \tilde{D}) = \mult_E(f^\ast D) + a = \mult_{C_1}(D) + a.
\]

These calculations immediately yield \(v_{\phi_{\ast}{D}} = M v_D\), since the second map \(\imath\) exchanges the two curves \(C_1\) and \(C_2\).  Write \(D_m\) for a general divisor linearly equivalent to \(mD\), and then 
\begin{align*}
\sigma_{C_1}(\phi_\ast D) &= \lim_{m \to \infty} \frac{1}{m} \mult_{C_1}(\phi_\ast D_m) = \lim_{m \to \infty} \frac{1}{m} \left( \mult_{C_1} D_m + D_m \cdot C_1 \right) \\
&= \left( \lim_{m \to \infty} \frac{1}{m} \mult_{C_1} D_m \right) + D \cdot C_1 =  \sigma_{C_1}(D) + a. \qedhere
\end{align*}
\end{proof}

We are now in position to make the main computation.
\begin{theorem}
\label{computeiterates}
Let \(\pi : X \to S\) be the versal deformation space of a singular fiber of Kodaira type \(I_2\), and let \(C_1\) be a component of the central fiber.  Suppose that \(D\) is a divisor on the boundary of the cone \(\Effb(X/S)\).  Then \(\sigma_{C_1}(D;X/S) = \infty\).
\end{theorem}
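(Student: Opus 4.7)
The plan is to iterate the birational self-map $\phi=(\sigma\circ\tau)\circ\imath$ of $X$ over $S$ introduced just before Lemma~\ref{multmatrix}, and to derive a contradiction from the hypothesis that $\sigma_{C_1}(D;X/S)$ is finite. As a preliminary step I would identify the cones in $N^1(X/S)$ explicitly. Since the generic fiber of $\pi$ is a smooth curve of genus one and every fiber is $\pi$-numerically equivalent to $C_1+C_2$, a class $D$ is $\pi$-pseudoeffective if and only if $(D\cdot C_1)+(D\cdot C_2)\geq 0$; this identifies $\Effb(X/S)$ with a half-plane inside the two-dimensional $N^1(X/S)$ whose boundary is the line $(D\cdot C_1)+(D\cdot C_2)=0$.

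The key structural observation is that the upper-left $2\times 2$ block of the matrix $M$ in Lemma~\ref{multmatrix} describes the action of $\phi_*$ on $N^1(X/S)$, and is unipotent with characteristic polynomial $(\lambda-1)^2$ and unique eigenline spanned by $(1,-1)$. In particular, the boundary of $\Effb(X/S)$ is precisely the fixed line of $\phi_*$, so $\phi^n_* D\equiv_\pi D$ for every $n$ whenever $D$ is on the boundary. Because $\sigma_{C_1}(\cdot;X/S)$ depends only on the $\pi$-numerical class (being defined through an infimum and a limit over numerically equivalent divisors), we then have $\sigma_{C_1}(\phi^n_* D;X/S)=\sigma_{C_1}(D;X/S)$ for every $n$.

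To produce the contradiction, I would iterate the full $4\times 4$ matrix $M$ on the initial vector $\sigma_D=(x,-x,s,s')$, where $x=D\cdot C_1$, $s=\sigma_{C_1}(D;X/S)$, and $s'=\sigma_{C_2}(D;X/S)$. Under the assumption that $s$ (and hence $s'$) is finite, numerical invariance propagates finiteness to each iterate $\phi^n_* D$, so Lemma~\ref{multmatrix} applies at every step and gives $\sigma_{\phi^n_* D}=M^n\sigma_D$. A short induction on $k$ then yields the explicit formula $\sigma_{C_1}(\phi^{2k}_* D;X/S)=s+kx$. Comparing with the constant value $s$ forced by numerical invariance gives $kx=0$ for every $k\geq 1$, hence $x=0$; combined with boundaryness this forces $D\equiv_\pi 0$, the degenerate $\pi$-numerically trivial class (for which $\sigma_{C_1}$ actually vanishes and which must be excluded from the statement). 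For every other boundary class we conclude $\sigma_{C_1}(D;X/S)=\infty$. The main subtlety will be justifying that Lemma~\ref{multmatrix} applies at every iterate rather than only at $n=0$: once numerical invariance is in place, finiteness at one iterate propagates to all, and the resulting tension between a constant (from invariance) and a linearly growing sequence (from iterating $M$) is exactly what forces the multiplicity to be infinite.
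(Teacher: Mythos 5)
Your proposal is correct, but it takes a genuinely different route from the paper. The paper argues directly: it fixes a \(\pi\)-ample \(H_0\) with \(v_{H_0}=(1,1,0,0)\), iterates \(M\) to get \(\sigma_{H_n}=(2n+1,-2n+1,n(n-1)/2,n(n+1)/2)\), observes \(H_n\equiv_\pi 2nD+H_0\), and reads off \(\sigma_{C_1}(D+\tfrac{1}{2n}H_0)=\tfrac{1}{2n}\cdot\tfrac{n(n-1)}{2}\to\infty\) --- the point being that the intersection numbers grow linearly while the multiplicities grow quadratically. You instead argue by contradiction from the fact that the boundary class is fixed by \(\phi_\ast\) on \(N^1(X/S)\) (the unipotent \(2\times 2\) block has eigenline \((1,-1)\)), while the \(\sigma\)-entries of the \(4\)-vector shift by \(D\cdot C_1\) under \(\phi_\ast^2\); numerical invariance of \(\sigma_{C_1}\) then forces \(s=s+kx\), which is impossible for finite \(s\) unless \(D\equiv_\pi 0\). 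Both arguments run on Lemma~\ref{multmatrix}; yours is arguably cleaner, treats every nonzero boundary class at once, and correctly isolates the degenerate class \(D\equiv_\pi 0\) (for which the statement as literally written fails, and which the paper also implicitly excludes by working with the specific class \((1,-1)\)). The one step you should spell out is the subtlety you flag at the end, though the real issue is not ``propagation of finiteness'': it is that the \(\sigma\)-part of Lemma~\ref{multmatrix} is proved in the paper via general members of \(\abs{mD}\), i.e.\ for \(\pi\)-big classes, and the paper only ever invokes it for the \(\pi\)-big divisors \(H_n\), whereas you need \(\sigma_{C_2}(\phi_\ast D)=\sigma_{C_1}(D)+D\cdot C_1\) for the non-big boundary class itself. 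This is fixable: apply the lemma to \(D+\epsilon A\) and let \(\epsilon\to 0\), using Proposition~\ref{sigmaproperties}(1) with \(F=\phi_\ast A\) on the transformed side, so the identities hold with values in \([0,\infty]\). With that justification in place your contradiction is complete (indeed already \(\sigma_{C_1}(\phi_\ast^2D)=s+1=s\) suffices, without needing \(s'\) finite).
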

\begin{proof}

Fix a \(\pi\)-ample effective \(\Q\)-divisor \(H = H_0\) on \(X\) with \(H \cdot C_1 = H \cdot C_2 = 1\) and \(\mult_{C_i}(H) = 0\). Let \(H_n = \phi_\ast^n(H)\) be the strict transform of \(H\) on \(X\) under \(n\) applications of \(\phi\).  Using the Jordan decomposition of \(M\), which has a \(3 \times 3\) block associated to the eigenvalue \(1\), we compute \(\sigma_{H_n} = (2n+1,-2n+1,n(n-1)/2,n(n+1)/2)\):
\[
\begin{array}{r|r|r|r|r}
n & H_n \cdot C_1 & H_n \cdot C_2 & \mult_{C_1} H_n & \mult_{C_2} H_n \\\hline 0 & 1 & 1 & 0 & 0 \\ 1 & 3 & -1 & 0 & 1 \\ 2 & 5 & -3 & 1 & 3 \\ 3 & 7 & -5 & 3 & 6 \\ && \cdots && \\ n & 2n+1 & -2n+1 & \frac{n(n-1)}{2} & \frac{n(n+1)}{2}
\end{array}
\]

The key feature of the example is that while \(H_n \cdot C_1\) grows linearly in \(n\), the multiplicity \(\mult_{C_1}(H_n)\) grows quadratically. Let \(D\) be the divisor class on the boundary of \(\Effb(X/S)\) with \(D \cdot C_1 = 1\) and \(D \cdot C_2 = -1\).  Since \(C_1\) and \(C_2\) span \(N_1(X/S)\), we see that
\[
H_n \equiv_\pi (2n)D + H_0.
\]

It follows that \(\frac{1}{2n} H_n \equiv_\pi D + \frac{1}{2n} H_0\) is a sequence of divisors converging to \(D\), whose multiplicities along the curves is known.  By Definition~\ref{sigmadef}, we compute
\[
\sigma_{C_1}(D;X/S) = \lim_{n \to \infty} \mult_{C_1} (D + \frac{1}{2n} H_0) = \lim_{n \to \infty} \frac{1}{2n} \mult_{C_1} H_n = \lim_{n \to \infty} \frac{n-1}{4} = \infty. \qedhere
\]
\end{proof}

Note that \(\codim \pi(C_1) = 2\), so there is no contradiction with Theorem~\ref{finiteconds}(3).

\begin{corollary}
\label{nozd}
If \(f : W \to X\) is the blow-up along \(C_1\) with exceptional divisor \(E\), then \(\tilde{D} = f^\ast D\) has \(\sigma_E(\tilde{D}; W/S) = \infty\) and \(N_\sigma(\tilde{D};W/S)\) contains the divisor \(E\) with infinite coefficient.  In particular, there does not exist a birational model \(g : Z \to W\) for which \(g^\ast \tilde{D}\) admits a decomposition \(g^\ast \tilde{D} = P+N\) with \(P\) a \(g \circ (f \circ \pi)\)-movable divisor and \(N\) effective.
\end{corollary}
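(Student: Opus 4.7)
The opening assertions of the corollary follow quickly from existing tools. Since $f : W \to X$ is the blow-up of the smooth curve $C_1$, the exceptional divisor $E$ is irreducible, and Proposition~\ref{sigmaproperties}(2) applied with $V = C_1$ gives $\sigma_E(\tilde{D};W/S) = \sigma_{C_1}(D;X/S)$, which equals $\infty$ by Theorem~\ref{computeiterates}. Hence the coefficient of $E$ in the formal sum $N_\sigma(\tilde{D};W/S) = \sum_\Gamma \sigma_\Gamma(\tilde{D};W/S)\,\Gamma$ is infinite, and this sum fails to define an $\R$-divisor.

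For the final sentence of the corollary, I would argue by contradiction. Suppose $g : Z \to W$ is birational and $g^\ast \tilde{D} = P + N$ with $P$ a $(g \circ f \circ \pi)$-movable divisor and $N$ effective. Let $\tilde{E} \subset Z$ denote the strict transform of $E$ under $g$. The plan is to show that $\sigma_{\tilde{E}}(g^\ast \tilde{D};Z/S)$ is simultaneously infinite (by birational invariance, from the first half) and finite (by the putative decomposition), yielding the desired contradiction.

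For the infinity, take a common resolution $h : Y \to Z$ and $h' := g \circ h : Y \to W$, and let $E^Y \subset Y$ be the common strict transform of $E \subset W$ and of $\tilde{E} \subset Z$. For any effective $\pi$-numerical representative $D'$ of a perturbation $\tilde{D} + \epsilon A$ on $W$, the pullback $(h')^\ast D'$ is an effective representative of $(h')^\ast (\tilde{D} + \epsilon A)$ on $Y$ with $\mult_{E^Y}((h')^\ast D') = \mult_E(D')$, and push-forward reverses this inequality; the analogous statement holds for $h : Y \to Z$ and the strict transform $\tilde{E}$. Taking infima and passing to $\epsilon \to 0$ yields
\[
\sigma_E(\tilde{D}; W/S) \;=\; \sigma_{E^Y}((h')^\ast \tilde{D}; Y/S) \;=\; \sigma_{\tilde{E}}(g^\ast \tilde{D}; Z/S) \;=\; \infty.
\]
This birational invariance of $\sigma$ along a divisorial valuation is the main obstacle: it is implicit in Proposition~\ref{sigmaproperties}(2), but extracting it requires the common-resolution bookkeeping above.

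For the finiteness, since $P$ is $\pi$-movable and any $\pi$-ample class $A_Z$ on $Z$ lies in the interior of $\Amp(Z/S)$, the divisor $P + \epsilon A_Z$ lies in the interior of $\Mov(Z/S)$ for every $\epsilon > 0$ and thus admits effective representatives whose multiplicity along $\tilde{E}$ tends to zero; thus $\sigma_{\tilde{E}}(P;Z/S) = 0$. Combined with the subadditivity $\sigma_{\tilde{E}}(D_1 + D_2) \leq \sigma_{\tilde{E}}(D_1) + \sigma_{\tilde{E}}(D_2)$ (obtained by summing effective representatives) and the trivial bound $\sigma_{\tilde{E}}(N) \leq \mult_{\tilde{E}}(N) < \infty$, this gives
\[
\sigma_{\tilde{E}}(g^\ast \tilde{D}; Z/S) \;\leq\; \sigma_{\tilde{E}}(P;Z/S) + \sigma_{\tilde{E}}(N;Z/S) \;\leq\; \mult_{\tilde{E}}(N) \;<\; \infty,
\]
contradicting the infinity established above and completing the argument.
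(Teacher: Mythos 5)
Your proposal is correct and follows essentially the same route as the paper: Proposition~\ref{sigmaproperties}(2) combined with Theorem~\ref{computeiterates} gives \(\sigma_E(\tilde D;W/S)=\infty\), and the nonexistence of a decomposition is obtained by the same contradiction via subadditivity, \(\sigma_{\tilde E}(P)=0\) for movable \(P\), and finiteness of \(\sigma_{\tilde E}(N)\) for effective \(N\). You merely spell out the birational invariance and the vanishing on movable classes, which the paper leaves implicit.
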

\begin{proof}
By Theorem~\ref{sigmaproperties}(2), if \(f : W \to X\) is the blow-up along \(C_1\), with exceptional divisor \(E\), we have \(\sigma_E(f^\ast D;W/S) = \infty\). Now, suppose that \(g : Y \to W\) is any birational map, and that \(g^\ast f^\ast D = P + N\), where \(P\) is a \((g \circ f \circ \pi)\)-movable divisor and \(N\) is effective.  Let \(\tilde{E}\) denote the strict transform of \(E\) on \(Y\). Then
\[
\sigma_{\tilde{E}}(g^\ast f^\ast D;Y/S) \leq \sigma_{\tilde{E}}(P;Y/S) + \sigma_{\tilde{E}}(N;Y/S) = \sigma_{\tilde{E}}(N;Y/S).
\]
The last of these is finite since \(N\) is effective, while the first is infinite, a contradiction. This completes the proof.
\end{proof}

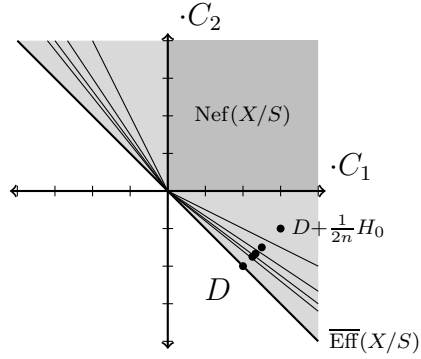
\begin{figure}[htb]
\centering
\begin{tikzpicture}[dot/.style={circle,fill=black,minimum size=3pt,inner sep=0pt,outer sep=-1pt}]

\fill[gray!30] (-2,2) -- (2,2) -- (2,-2) -- cycle;
\fill[gray!50] (0,0) -- (2,0) -- (2,2) -- (0,2) -- cycle;

\draw [<->,thick] (-2.1,0) -- (2.1,0);
\draw [<->,thick] (0,-2.1) -- (0,2.1);
\foreach \xx[evaluate={\x=\xx/2}] in {-4,-3,...,4} {
\draw (\x,2pt) -- (\x,-2pt);
\draw (2pt,\x) -- (-2pt,\x);
}

\foreach \x in {0,...,4} {
\draw (0,0)--($ (2,{-2*\x/(\x+1)}) $);
\draw (0,0)--($ ({-2*\x/(\x+1)},2) $);
}
\draw [thick] (0,0)--(2,-2);
\draw [thick] (0,0)--(-2,2);

\node [above right] (C1) at (2,0) {$\cdot C_1$};
\node [above right] (C2) at (0,2) {$\cdot C_2$};
\node [right] (EF) at (2,-2) {$\scriptstyle \Effb(X/S)$};
\node (NF) at (1,1) {$\scriptstyle \Nef(X/S)$};
\node [dot,label=below left:$D$] (D) at (1,-1) {};

\node [right] (EF) at (3/2,-1/2) {$\scriptstyle D+\frac{1}{2n} H_0$};

\foreach \x in {1,...,4} {
\node [dot] (D) at ($ ({1+1/(2*\x)},{-1+1/(2*\x)}) $) {};
}
\end{tikzpicture}
\caption{Chambers in $N^1(X/S)$}
\end{figure}

\begin{remark}
\label{affinebase}
The construction of the automorphism \(\beta : X^+ \to X/S\) in Lemma~\ref{flopandiso} (from \cite{kawamatacy}) is only over a surface germ \(S\), for it relies on the fact that \(\pi : X \to S\) is a versal deformation space.  However, the local analytic results of Theorem~\ref{computeiterates} and Corollary~\ref{nozd} imply that the same pathological behavior occurs even when the base \(S\) is an affine surface.  We have seen that there is a projective family \(\bar{\pi} : \bar{X} \to \bar{S}\) where \(\bar{S}\) is an affine surface, such that the restriction of \(\bar{\pi}\) to the germ at a point \(0 \in \bar{S}\) coincides with the map \(\pi : X \to S\).

If \(\bar{G}\) is a \(\bar{\pi}\)-big divisor, with restriction \(G\) to the germ, then \(\sigma_{C_1}(\bar{G};\bar{X}/\bar{S}) \geq \sigma_{C_1}(G;X/S)\): indeed, if \(\bar{G}^\prime\) is an effective divisor on \(\bar{X}\) which is \(\bar{\pi}\)-numerically equivalent to \(\bar{G}\), its restriction to the central germ is an effective divisor on \(X\) which is \(\pi\)-numerically equivalent to \(G\).  Thus the infimum defining \(\sigma_{C_1}(\bar{G};\bar{X}/\bar{S})\) in is taken over a subset of the infimum defining \(\sigma_{C_1}(G;X/S)\) in Definition~\ref{sigmadef}, giving the claimed inequality. It follows that in the limit at the pseudoeffective boundary, \(\sigma_{\bar{C_1}}(\bar{D};\bar{X}/\bar{S}) \geq \sigma_{C_1}(D;X/S)\), and it must be that \(\sigma_{\bar{C_1}}(\bar{D};\bar{X}/\bar{S})\) is infinite as well.  The claims about Zariski decomposition follow as before.
\end{remark}

\section{A general set-up}
\label{generalsetup}

The key feature that made possible the computation of the preceding example is that if the four numbers \(D \cdot C_i\) and \(\mult_{C_i}(D)\) are all known, then the same four invariants can be computed for the strict transform of \(D\) under \(\phi\) using Lemma~\ref{multmatrix}.  In this section, we give an explanation for this, and describe how to make analogous computations in a more general setting.

Suppose that \(\phi : X \rat X\) is a pseudoautomorphism over \(S\), i.e.\ a birational map for which neither \(\phi\) nor \(\phi^{-1}\) contracts any divisors.  We will say that a birational morphism \(f : Y \to X\) from a normal \(\Q\)-factorial variety \(Y\) is a \emph{small lift} of \(\phi\) if the induced map \(\psi : Y \rat Y\) is also a pseudoautomorphism.
\[
\xymatrix{
Y \ar[d]_f \ar@{-->}[r] \ar@{-->}[r]_\psi & Y \ar[d]_f \\ X \ar@{-->}[r]_\phi & X }
\]
Observe that if \(f : Y \to X\) is a small lift, then the map \(\psi\) must permute the exceptional divisors of \(f\).  

\begin{example}
Suppose that \(\phi : X \rat X\) is a pseudoautomorphism and \(x\) is a point not contained in \(\indet \phi\).  The blow-up \(f : \Bl_x X \to X\) is a small lift of \(\phi\) if and only if \(x\) is a fixed point of \(\phi\).  If \(x\) is not a fixed point, then the induced map \(\psi : Y \rat Y\) contracts the exceptional divisor \(E\), while if \(x\) is fixed, then \(\psi\vert_E : E \to E\) is an automorphism.
\end{example}

The more interesting examples are those in which \(f\) contracts a divisor lying over \(\indet \phi\).

\begin{example}
\label{mainexample}
Next we construct a small lift of the map \(\phi : X \rat X/S\) from Section~\ref{examplesect}.  Let \(f : W \to X\) be the blow-up along \(C_1\) as before, with exceptional divisor \(E_1\), and let \(h : Y \to W\) be the blow-up along \(\bar{C}_2^\prime\), with exceptional divisor \(E_2\). The two exceptional divisors \(E_1\) and \(E_2\) are swapped by the induced map \(\psi : Y \rat Y\), and \(h \circ f\) is a small lift.
\[
\xymatrix{
Y \ar@{-->}[rr]^\psi \ar[dd]_{h \circ f} \ar[dr]^{h} && Y \ar[dd]^{h \circ f} \\ 
& W \ar[dl]_f \ar[dr]^g & \\ 
X \ar@{-->}[rr]^{\phi}  && X 
}
\]
The curves \(C_1\) and \(C_2\) could have been blown up in the opposite order, yielding a different small lift \(f^\prime : Y^\prime \to X\).  This makes no real difference: the threefolds \(Y\) and \(Y^\prime\) differ only by flops, and strict transform induces an identification \(N^1(Y) \xrightarrow{\sim} N^1(Y^\prime)\) with respect to which the maps \(\psi_\ast\) and \(\psi_\ast^\prime\) coincide.
\end{example}

If \(f : Y \to X\) is a small lift, it follows from the negativity lemma~\cite[Lemma 3.6.2]{bchm} that there is a decomposition \(N^1(Y) = f^\ast N^1(X) \oplus V_E\), where \(V_E = \bigoplus_i \R \cdot [E_i]\).  If \(D\) is a divisor class on \(X\), it is not necessarily true that \(f^\ast \phi_\ast D = \psi_\ast f^\ast D\). However, the difference \(f^\ast \phi_\ast D - \psi_\ast f^\ast D \) is an \(f\)-exceptional divisor, since
\[
f_\ast(f^\ast \phi_\ast D - \psi_\ast f^\ast D) = \phi_\ast D - f_\ast \psi_\ast f^\ast D = \phi_\ast D - \phi_\ast f_\ast  f^\ast D = \phi_\ast D - \phi_\ast D = 0.
\]
Define \(K : N^1(X) \to V_E\) by \(K = f^\ast \phi_\ast-\psi_\ast f^\ast \).  The next lemma characterizes the action of the strict transform \(\psi_\ast : N^1(Y) \to N^1(Y)\) with respect to this decomposition.

\begin{lemma}
\label{liftmap}
Suppose that \(f : Y \to X\) is a small lift of a pseudoautomorphism \(\phi : X \rat X\).  With respect to the decomposition \(N^1(Y) \cong f^\ast N^1(X) \oplus V_E\), \(\psi_\ast\) is given in block form as
\[
\psi_\ast = \left( \begin{array}{c|c} \phi_\ast & 0 \\\hline
-K & P \end{array} \right),
\]
where \(P\) is the permutation matrix for the action of \(\psi_\ast\) on the \(E_i\).  The eigenvalues of \(\psi_\ast\) are the union of those of \(\phi_\ast\) and those of \(P\), which are roots of unity. Its eigenvectors are
\begin{enumerate}
\item \(f^\ast v_i - (\lambda I - P)^{-1} Kv_i\), where \(v_i\) are the eigenvectors of \(\phi_\ast\), with eigenvalues \(\lambda_i\);
\item \(E_i\), the exceptional divisors of \(f\), with eigenvalues that are roots of unity.
\end{enumerate}
\end{lemma}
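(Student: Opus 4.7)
The plan is to verify three things in sequence: that $N^1(Y) = f^\ast N^1(X) \oplus V_E$ really is a direct sum decomposition in which $\psi_\ast$ preserves $V_E$, that reading off the action in this basis produces the stated block form, and that the Jordan / eigenvector analysis of a block lower triangular matrix then yields the advertised spectrum and eigenvectors.

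First, I would establish the decomposition. Injectivity of $f^\ast$ on $N^1$ comes from $f_\ast f^\ast = \mathrm{id}$, while the negativity lemma cited in the paper guarantees that no nonzero combination of the $E_i$ is numerically a pullback from $X$; combined with the identity $D = f^\ast f_\ast D + (D - f^\ast f_\ast D)$ (the second summand being $f$-exceptional), this gives the direct sum. The fact that $\psi_\ast$ permutes the $E_i$ was already observed in the paper: $\psi_\ast E_i$ is irreducible (since $\psi$ is a pseudoautomorphism), and $f_\ast \psi_\ast E_i = \phi_\ast f_\ast E_i = 0$ forces it to be $f$-exceptional, hence equal to some $E_{j}$ depending on $i$. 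This defines the permutation matrix $P$ on $V_E$.

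Next, I would read off the block form. The definition of $K$ rearranges to $\psi_\ast f^\ast D = f^\ast \phi_\ast D - K D$, which in the decomposition reads $(D,0) \mapsto (\phi_\ast D, -K D)$; this supplies the first block column $(\phi_\ast, -K)^T$. For the second block column, step one gives $\psi_\ast(0, e_i) = (0, P e_i)$, with the top-right zero recording that $\psi_\ast E_i$ has no $f^\ast N^1(X)$ component. Block lower triangularity of the resulting matrix yields the spectrum as the union of the spectra of $\phi_\ast$ and $P$, and $P$ being a permutation matrix has only roots of unity as eigenvalues. Eigenvectors inside $V_E$ are the familiar eigenvectors of $P$, assembled cycle by cycle from the $E_i$. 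To lift an eigenvector $v_i$ of $\phi_\ast$ with eigenvalue $\lambda_i$, I make the ansatz $f^\ast v_i + w_i$ with $w_i \in V_E$; the bottom block equation forces $(P - \lambda_i I) w_i = K v_i$, giving $w_i = -(\lambda_i I - P)^{-1} K v_i$ as claimed.

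The main subtlety, and the only real obstacle, is that the inversion $(\lambda_i I - P)^{-1}$ is only justified when $\lambda_i$ is not already an eigenvalue of $P$. This is a genuine caveat, since in the main example of Section~\ref{examplesect} the Jordan block of $\phi_\ast$ has eigenvalue $1$, which is also an eigenvalue of the swapping permutation $P$; in such resonant cases one replaces the explicit formula by generalized eigenvectors extracted from the Jordan form of the block matrix, while the qualitative statement that the spectrum of $\psi_\ast$ is the union of those of $\phi_\ast$ and $P$ is unaffected.
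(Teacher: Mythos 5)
Your proof is correct and follows the same route as the paper's, which consists of a single sentence deriving the block form from the identity $\psi_\ast f^\ast D = f^\ast \phi_\ast D - KD$ and the permutation of the $E_i$, and then deferring the eigenvector computation to ``elementary linear algebra.'' Your added observation that the formula $f^\ast v_i - (\lambda_i I - P)^{-1}Kv_i$ breaks down when $\lambda_i$ is also an eigenvalue of $P$ --- which is exactly what happens in the paper's main example, where $\lambda=1$ occurs for both the $\phi_\ast$ block and the swap $P$, producing the $3\times 3$ Jordan block used in Theorem~\ref{computeiterates} --- is a genuine caveat that the paper's statement and proof silently gloss over, and your proposed fix (pass to generalized eigenvectors in the resonant case) is the right one.
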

\begin{proof}
For a divisor \(D\) on \(X\), \(\psi_\ast f^\ast D = f^\ast \phi_\ast D - KD\), while the exceptional divisors \(E_i\) are simply permuted by \(\psi\); this gives the block form of the map. The eigenvectors follow from elementary linear algebra.
\end{proof}

A rational map \(\phi : X \to Y\) is said to be \(D\)-non-negative for an \(\R\)-divisor \(D\) if on some common resolution \(f : W \to X\), \(g : W \to Y\), we have \(f^\ast D +E= g^\ast(\phi_\ast D)\), where \(E\) is an effective \(g\)-exceptional divisor. If \(\phi : X \rat X\) is a pseudoautomorphism with a small lift \(f\), then we may consider a resolution of the form
\[
\xymatrix{
& W \ar[dl]_p \ar[dr]^q \\ Y \ar[d]_f  \ar@{-->}[rr]_\psi && Y \ar[d]_f \\ X \ar@{-->}[rr]_\phi && X }
\]
If \(D\) is a divisor on \(X\) for which \(\phi\) is \(D\)-non-negative, then we have \(p^\ast f^\ast D +E= q^\ast f^\ast \phi_\ast D \) with \(E \geq 0\).  Pushing forward both sides by \(q\), this gives 
\begin{align*}
q_\ast p^\ast f^\ast D +q_\ast E&= f^\ast \phi_\ast D \\
\psi_\ast f^\ast D + E^\prime &= f^\ast \phi_\ast D,
\end{align*}
where \(E^\prime\) is an effective \(f\)-exceptional divisor.  In particular, \(KD = f^\ast \phi_\ast D- \psi_\ast f^\ast D  = E^\prime\) is effective.

Next we observe that if the divisorial Zariski decomposition \(P_\sigma(f^\ast D)\) is known for some divisor \(D\), the decomposition \(P_\sigma(f^\ast\phi_\ast D)\) can often be computed, using the strict transform under \(\psi : Y \rat Y\). For simplicity, we assume that \(\psi\) fixes each of the \(f\)-exceptional divisors \(E_i\); this can always be arranged by replacing \(\phi\) by a suitable iterate.  This assumption implies that the permutation matrix \(P\) is the identity, and that \(\psi_\ast (KD) = KD\) since \(KD\) is exceptional.

\begin{lemma}
\label{pullbackzd}
Suppose \(\phi : X \rat X\) is a pseudoautomorphism over \(S\), and that \(D\) is a class in \(N^1(X/S)\). Then \(N_\sigma(\phi_\ast D;X/S) = \phi_\ast N_\sigma(D;X/S)\).  If \(N_\sigma(D;X/S)\) is finite, then \(P_\sigma(\phi_\ast D;X/S) = \phi_\ast P_\sigma(D;X/S)\) as well.  If \(\phi\) is \(D\)-non-negative and \(N_\sigma(f^\ast D;Y/S)\) is finite, then \(P_\sigma(f^\ast \phi_\ast D;Y/S) = \psi_\ast P_\sigma(f^\ast D;Y/S)\).
\end{lemma}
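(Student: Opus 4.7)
The plan is to handle the three parts in sequence: Parts 1 and 2 follow essentially formally from the fact that a pseudoautomorphism preserves the $\sigma_\Gamma$ invariants, while Part 3 requires a more delicate analysis at the exceptional divisors of the small lift.

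For Part 1, I would use that $\phi$ is an isomorphism in codimension one, so pushforward induces a bijection between prime divisors on $X$ (sending $\Gamma$ to $\phi_\ast \Gamma$) that preserves multiplicities: $\mult_\Gamma(D') = \mult_{\phi_\ast \Gamma}(\phi_\ast D')$ for any effective $\R$-divisor $D'$. The same bijection matches effective representatives of $D$ with effective representatives of $\phi_\ast D$, so the infimum in Definition~\ref{sigmadef} yields $\sigma_\Gamma(D;X/S) = \sigma_{\phi_\ast \Gamma}(\phi_\ast D;X/S)$ in the $\pi$-big case, extending to the pseudoeffective case by passing to the limit over $\pi$-ample perturbations. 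Summing over prime divisors and reindexing gives Part 1. Part 2 is then immediate from linearity:
\[
\phi_\ast P_\sigma(D) = \phi_\ast D - \phi_\ast N_\sigma(D) = \phi_\ast D - N_\sigma(\phi_\ast D) = P_\sigma(\phi_\ast D),
\]
using that finiteness of $N_\sigma(D;X/S)$ propagates to $N_\sigma(\phi_\ast D;X/S)$ by Part 1.

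For Part 3, I would begin with the identity $f^\ast \phi_\ast D = \psi_\ast f^\ast D + KD$ from the preceding discussion, where $KD$ is effective (by $D$-non-negativity of $\phi$) and $f$-exceptional. Applying Part 2 to the pseudoautomorphism $\psi$ of $Y$ (which requires finiteness of $N_\sigma(f^\ast D; Y/S)$, as assumed), the desired equality reduces to
\[
P_\sigma(f^\ast \phi_\ast D; Y/S) = P_\sigma(\psi_\ast f^\ast D; Y/S),
\]
i.e.\ to showing that the positive part is unchanged by adding the effective $f$-exceptional divisor $KD$. Equivalently, one must prove $N_\sigma(f^\ast \phi_\ast D; Y/S) = N_\sigma(\psi_\ast f^\ast D; Y/S) + KD$, which I would verify prime by prime, reducing each $\sigma_\Gamma$ on $Y$ to an asymptotic multiplicity $\sigma_V$ of a subvariety $V \subset X$ via Proposition~\ref{sigmaproperties}(2). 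For $\Gamma$ off the support of $KD$, the equality $\sigma_\Gamma(f^\ast \phi_\ast D) = \sigma_\Gamma(\psi_\ast f^\ast D)$ should follow from Part 1 combined with the compatibility $f \circ \psi = \phi \circ f$ relating the centers of $\Gamma$ and $\psi^{-1}_\ast \Gamma$ on $X$.

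The main obstacle is the case of $\Gamma$ an $f$-exceptional component of $KD$, where one must establish the precise quantitative identity $\sigma_\Gamma(f^\ast \phi_\ast D; Y/S) = \sigma_\Gamma(\psi_\ast f^\ast D; Y/S) + \mult_\Gamma KD$. The $D$-non-negativity hypothesis is essential here: on a common resolution $p : W \to X$, $q : W \to X$ of $\phi$, the effective $q$-exceptional divisor $E$ in $p^\ast D + E = q^\ast \phi_\ast D$ is exactly what contributes the extra $\mult_\Gamma KD$ term when multiplicities are tracked through the small lift $f$ and the permutation action of $\psi$ on $f$-exceptional divisors. Subadditivity of $\sigma_\Gamma$ together with $\sigma_\Gamma(KD) = \mult_\Gamma KD$ yields the inequality ``$\leq$'' immediately; the reverse inequality is the nontrivial step, and I would prove it by a negativity-lemma argument showing that any effective representative of $f^\ast \phi_\ast D + \epsilon A$ must asymptotically contain $KD$ with full multiplicity along its $f$-exceptional support, so that taking the infimum and letting $\epsilon \to 0$ recovers $\mult_\Gamma KD + \sigma_\Gamma(\psi_\ast f^\ast D)$.
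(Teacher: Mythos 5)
Your proposal is correct and follows essentially the same route as the paper: parts (1)--(2) via the multiplicity-preserving bijection on prime divisors induced by the isomorphism in codimension one, and part (3) via the identity \(f^\ast \phi_\ast D = \psi_\ast f^\ast D + KD\) with \(KD\) effective and \(f\)-exceptional. The only difference is that the ``nontrivial reverse inequality'' you propose to establish by a negativity-lemma argument --- that adding an effective exceptional divisor \(E\) to a pullback adds exactly \(E\) to \(N_\sigma\) --- is precisely Nakayama's Lemma 3.5.1, which the paper simply cites (and applies to \(f^\ast D + KD\) after first using \(\psi_\ast\)-equivariance to rewrite \(\psi_\ast f^\ast D + KD = \psi_\ast(f^\ast D + KD)\)) rather than reproving.
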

\begin{proof}
Since \(\phi\) neither contracts nor extracts any divisors, for any prime divisor \(E\) we have \(\sigma_E(D;X/S) = \sigma_{\phi_\ast E}(\phi_\ast D;X/S)\). The claim for \(N_\sigma(\phi_\ast D;X/S)\) follows, and that for \(P_\sigma(\phi_\ast D;X/S)\) is immediate.

Now, by the \(D\)-non-negativity hypothesis on \(\phi\), \(KD\) is an effective exceptional divisor.  By ~\cite[Lemma 3.5.1]{nakayama}, if \(E\) is an effective exceptional divisor, we have \(N_\sigma(f^\ast D + E) = N_\sigma(f^\ast D) + E\).  This means that
\begin{align*}
N_\sigma(f^\ast \phi_\ast D) &= N_\sigma(\psi_\ast f^\ast D +KD)= N_\sigma(\psi_\ast (f^\ast D + KD)) = \psi_\ast N_\sigma(f^\ast D + KD) \\ &= \psi_\ast N_\sigma(f^\ast D) + \psi_\ast KD = \psi_\ast N_\sigma(f^\ast D) + KD.
\end{align*}
We have made use of the fact that \(E\) is effective by the non-negativity hypothesis on \(D\). It is now simple to compute the positive part of the decomposition:
\begin{align*}
P_\sigma(f^\ast \phi_\ast D) &= f^\ast \phi_\ast D - N_\sigma(f^\ast \phi_\ast D) = f^\ast \phi_\ast D - \psi_\ast N_\sigma(f^\ast D) - KD \\ 
&= \psi_\ast f^\ast D - N_\sigma(\psi_\ast f^\ast D) = P_\sigma(\psi_\ast f^\ast D) = \psi_\ast P_\sigma(f^\ast D). \qedhere
\end{align*}
\end{proof}

\begin{remark}
The example of Section~\ref{examplesect} can be interpreted as an instance of the calculations in this section.  A small lift of the map \(\phi\) is constructed in Example~\ref{mainexample}.  Let \(F_1,F_2\) be a basis for \(N^1(X/S)\) dual to \(C_1\) and \(C_2\). A basis for \(N^1(Y/S)\) is given by the four classes \((h \circ f)^\ast F_1\), \((h \circ f)^\ast F_2\), \(E_1\), and \(E_2\).  The vector \(v_D\) gives the coefficients for the class of the strict transform of \(D\) on \(Y\) with respect to this above basis. Lemma~\ref{multmatrix} is nothing more than the calculation of the induced map \(\psi_\ast\) of Lemma~\ref{liftmap}.  The final calculation in Theorem~\ref{computeiterates} can then be carried out as a repeated application of Lemma~\ref{pullbackzd}.
\end{remark}

Suppose now that \(S = \Spec \C\) and \(\phi : X \rat X\) is a pseudoautomorphism whose action on \(N^1(X)\) has a unique largest eigenvalue, greater than \(1\), and that \(f : Y \to X\) is a small lift of \(\phi\).  We are then able to compute the Zariski decomposition of the divisor \(f^\ast D_\phi\) using the above result.
\begin{corollary}
Let \(D_\phi\) be the dominant eigenvector of \(\phi_\ast : N^1(X) \to N^1(X)\), and \(D_\psi\) be the dominant eigenvector of \(\psi_\ast : N^1(Y) \to N^1(Y)\). Then \(P_\sigma(f^\ast D_\phi) = D_\psi\).
\end{corollary}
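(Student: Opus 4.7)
The strategy is to show that $P_\sigma(f^\ast D_\phi)$ is an eigenvector of $\psi_\ast$ with eigenvalue $\lambda$ (the dominant eigenvalue of $\phi_\ast$), invoke Lemma~\ref{liftmap} to identify the one-dimensional $\lambda$-eigenspace of $\psi_\ast$ as being spanned by $D_\psi$, and then fix the remaining scalar by pushing forward to $X$.

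First I would verify the hypotheses needed to apply Lemma~\ref{pullbackzd} to $f^\ast D_\phi$. Since $S = \Spec \C$, Theorem~\ref{finiteconds}(1) gives that $N_\sigma(f^\ast D_\phi; Y)$ is finite. A Perron--Frobenius argument applied to the cone $\Movb(X)$, which is preserved by the pseudoautomorphism $\phi$, places $D_\phi$ in $\Movb(X)$; from this and the negativity lemma one deduces that $\phi$ is $D_\phi$-non-negative. Replacing $\phi$ by a power if necessary, we may also assume $\psi$ fixes each $f$-exceptional divisor, so that the hypotheses of Lemma~\ref{pullbackzd} apply.

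Lemma~\ref{pullbackzd} then yields
\[
\psi_\ast P_\sigma(f^\ast D_\phi) \;=\; P_\sigma(f^\ast \phi_\ast D_\phi) \;=\; P_\sigma(\lambda f^\ast D_\phi) \;=\; \lambda \, P_\sigma(f^\ast D_\phi),
\]
using $\phi_\ast D_\phi = \lambda D_\phi$ and the homogeneity of $P_\sigma$. Thus $P_\sigma(f^\ast D_\phi)$ is a $\lambda$-eigenvector of $\psi_\ast$. By Lemma~\ref{liftmap} the spectrum of $\psi_\ast$ consists of the eigenvalues of $\phi_\ast$ together with roots of unity from $P$; since $\lambda > 1$ and the $\lambda$-eigenspace of $\phi_\ast$ is one-dimensional (by the uniqueness hypothesis), the $\lambda$-eigenspace of $\psi_\ast$ is also one-dimensional, spanned by $D_\psi = f^\ast D_\phi - (\lambda I - P)^{-1} K D_\phi$. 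Hence $P_\sigma(f^\ast D_\phi) = c \cdot D_\psi$ for some $c \in \R$.

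To pin down $c$, I would push both sides forward by $f_\ast$. Because $D_\phi$ is movable on $X$, no non-exceptional prime divisor $\Gamma$ can have $\sigma_\Gamma(f^\ast D_\phi) > 0$ (else $\sigma_{f_\ast \Gamma}(D_\phi) > 0$ on $X$, contradicting movability), so $N_\sigma(f^\ast D_\phi)$ is $f$-exceptional and $f_\ast P_\sigma(f^\ast D_\phi) = D_\phi$. On the other side, $(\lambda I - P)^{-1} K D_\phi$ lies in $V_E$ and is therefore $f$-exceptional, so $f_\ast D_\psi = D_\phi$ as well. This forces $c = 1$, completing the proof. The principal technical obstacle is verifying movability of $D_\phi$ and the $D_\phi$-non-negativity of $\phi$, both of which follow from the standard fact that pseudoautomorphisms preserve $\Movb(X)$.
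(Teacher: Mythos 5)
Your argument reaches the correct conclusion, but by a genuinely different mechanism than the paper's. The paper iterates Lemma~\ref{pullbackzd}: starting from $D = D_\phi + D_{\phi^{-1}}$ it writes $P_\sigma(f^\ast(\lambda^{-n}\phi_\ast^n D)) = \lambda^{-n}\psi_\ast^n P_\sigma(f^\ast D)$, lets $n \to \infty$, and identifies the limit of the left side via the continuity statement Proposition~\ref{sigmaproperties}(1) and the limit of the right side via power iteration, obtaining $D_\psi$ only ``with a suitable choice of scaling.'' You instead apply the functional equation a single time, to $D_\phi$ itself, conclude that $P_\sigma(f^\ast D_\phi)$ is a $\lambda$-eigenvector of $\psi_\ast$, invoke Lemma~\ref{liftmap} to see that the $\lambda$-eigenspace is one-dimensional, and then fix the scalar by applying $f_\ast$. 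Your route avoids the limit entirely and, unlike the paper, actually pins down the normalization: the observation that $N_\sigma(f^\ast D_\phi)$ is $f$-exceptional because $D_\phi$ is movable, so that $f_\ast P_\sigma(f^\ast D_\phi) = D_\phi = f_\ast D_\psi$, makes precise what the paper leaves as an unexamined scaling. The trade-off is that the paper only needs the functional equation along the forward orbit of an auxiliary big class, while you need it exactly at the eigenvector.

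One step does not hold up as written: the claim that $D_\phi \in \Movb(X)$ together with the negativity lemma yields $D_\phi$-non-negativity of $\phi$. Movability does not imply this. Already for a single flop $\tau$ of a curve $C$, one has $q^\ast \tau_\ast D - p^\ast D = aE$ with $a = D \cdot C$, so $\tau$ is $D$-\emph{negative} whenever $D \cdot C < 0$; such $D$ include every class that is ample on the flopped model, and these are movable on $X$. What the negativity lemma gives directly is that \emph{nefness} of $D$ on the source implies $D$-non-negativity, and $D_\phi$ is in general only movable, not nef. So your justification of the hypothesis of Lemma~\ref{pullbackzd} is a gap. In fairness, the paper's own proof silently assumes the $D$-non-negativity hypothesis for $D_\phi + D_{\phi^{-1}}$ and all of its pushforwards $\phi_\ast^n D$ (which accumulate on the ray of $D_\phi$), so both arguments ultimately rest on the same unverified point; but if you intend your proof to be self-contained, you need an actual argument that $\phi$ is $D_\phi$-non-negative, not the appeal to movability.
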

\begin{proof}
If \(D\) is any pseudoeffective divisor on \(X\), then for every \(n\) we have 
\[ 
P_\sigma (f^\ast ( \lambda^{-n} \phi_\ast^n D )) =  \lambda^{-n} \psi_\ast^n P_\sigma(f^\ast D).
\]
Take \(D = D_\phi + D_{\phi^{-1}}\), so that the above reduces to
\[
P_\sigma(f^\ast(D_\phi + \lambda^{-2n} D_{\phi^{-1}}))  = \lambda^{-n} \psi_\ast^n P_\sigma(f^\ast D).
\]
The left hand side converges to \(P_\sigma(f^\ast D_\phi)\) by Proposition~\ref{sigmaproperties}(1).  With a suitable choice of scaling, the right hand side converges to \(D_\psi\).
\end{proof}

\section{Acknowledgements}
I am grateful to James M\textsuperscript{c}Kernan for some useful questions and suggestions.  This material is based upon work supported by the National Science Foundation under agreement No.\ DMS-1128155. Any opinions, findings and conclusions or recommendations expressed in this material are those of the author and do not necessarily reflect the views of the National Science Foundation.

\bibliographystyle{amsplain}
\bibliography{zrefs}

\end{document}